\renewcommand*\libertine@figurestyle{LF}
\renewcommand*\libertine@figurestyle{OsF}
\crefname{lemma}{lemma}{lemmata}
\Crefname{lemma}{Lemma}{Lemmata}
\crefname{subsection}{subsection}{subsections}
\Crefname{subsection}{Subsection}{Subsections}
\newtheorem{theorem}{Theorem}[section]
\newtheorem{lemma}[theorem]{Lemma}
\newtheorem{proposition}[theorem]{Proposition}
\newtheorem{corollary}[theorem]{Corollary}
\theoremstyle{definition}
\newtheorem{definition}[theorem]{Definition}
\newtheorem{remark}[theorem]{Remark}
\newtheorem{example}[theorem]{Example}
\newtheorem{construction}[theorem]{Construction}
\def\CP1{\mathbb{C}\mathrm{P}^1}
\newcommand\restr[2]{{
  \left.\kern-\nulldelimiterspace 
  #1 
  \vphantom{\big|} 
  \right|_{#2} 
  }}
\newcommand\restrst[2]{{
  \left.\kern-\nulldelimiterspace 
  #1 
  \vphantom{\big|} 
  \right|^*_{#2} 
  }}
\title{One-parameter families of multiview varieties via quotient lattices}
\author[M.~A.~Hahn]{Marvin Anas Hahn}
\address{M.~A.~Hahn: Max--Planck Institute for Mathematics in the Sciences, Inselstraße 22, 04103 Leipzig}
\email{mhahn@mis.mpg.de}
\keywords{Computer vision, multi-view geometry, Bruhat-Tits building}
\subjclass[2020]{14D06, 14D10, 14N99}
\begin{document}
\begin{abstract}
We develop a novel theory of one-parameter families of multi-view varieties. These families are induced by quotient lattices over discrete valuation rings and generalise the notion of \textit{Mustafin varieties}. We study the geometry and the combinatorics of the limit of these families.
\end{abstract}
\maketitle
\tableofcontents

\section{Introduction}
In recent years, the application of algebraic geometry in the field of computer vision has attracted a lot of interest, particularly in the context of so-called \textit{multi-view geometry} \cite{,aholt2013hilbert,joswig2016rigid,duff2019plmp,lieblich2020two,agarwal2019ideals,ponce2017congruences}. The field of multi-view geometry is concerned with the situation of several cameras taking pictures of the same object simultaneously. For so-called \textit{pinhole cameras} taking a picture is modelled as a projection, i.e. each camera is represented by a $3\times 4$ matrix $A_i$ of rank $3$ over a field $K$. For a tuple $\underline{A}=(A_1,\dots,A_n)$ of such matrices -- where each matrix represents a camera -- one therefore considers the rational map
\begin{equation}
\begin{tikzcd}
f_{\underline{A}}\colon\mathbb{P}^3 \arrow[dashed,"{(A_1,\dots,A_n)}"]{rr} & & \left(\mathbb{P}^2\right)^n
\end{tikzcd}
\end{equation}
which we call the \textit{vision map}. Furthermore, we denote $M_{\underline{A}}=\overline{\mathrm{Im}(f_{\underline{A}})}$ and call $M_{\underline{A}}$ the \textit{multi-view variety} associated to $\underline{A}$. Multi-view varieties are central objects in reconstruction problems in computer vision \cite{hartley2003multiple}.\\
It is natural to generalise this to the following situation: Let $d,n,l_1,\dots,l_n$ be positive integers with $l_i\le d$ and $A_i$ a $l_i\times d$ matrix. The tuple $\underline{A}=(A_1,\dots,A_n)$ induces a rational map
\begin{equation}
\begin{tikzcd}
f_{\underline{A}}\colon\mathbb{P}^{d-1} \arrow[dashed,"{(A_1,\dots,A_n)}"]{rr} & & \prod_{i=1}^n\mathbb{P}^{l_i-1}
\end{tikzcd}
\end{equation}
which we call the \textit{generalised vision map}. As above, we denote the closure of the image of $f_{\underline{A}}$ by $M_{\underline{A}}$ and call this the \textit{generalised multi-view variety} associated to $\underline{A}$. Generalised multi-view varieties were first studied in \cite{conca2017cartwright} in the context of so-called Cartwright--Sturmfels ideals. We note that for $d=4,l_i=3$, we obtain the multi-view varieties associated to pinhole cameras defined above, while for $d=4,l_i=2$ and $n$ even generalised multi-view varieties specialise to the situation of so-called \textit{two-slit cameras} considered in \cite{ponce2017congruences}.\\
In recent years, the study of families of multi-view varieties has gained a lot of interest, in particular due to surprising connections to Hilbert schemes \cite{aholt2013hilbert,lieblich2020two}. In this paper, we develop a novel theory of one-parameter families of multi-view varieties induced by a finite set of quotient-lattices over a discrete valuation ring that we call \textit{Mustafin degenerations of generalised multi-view varieties}. Our construction of these families is motivated by the notion of so-called \textit{Mustafin varieties} in arithmetic geometry. These objects were introduced by Mustafin \cite{mustafin1978nonarchimedean} in 1970s and are degenerations of projective spaces induced by a finite set of lattices. Since their introduction, Mustafin varieties have given rise to applications in the theory of Shimura varieties, Chow quotients of Grassmannians, tropical geometry, limit linear series theory and arithmetic geometry \cite{fa,keel2006geometry,cartwright2011mustafin,hahn2017mustafin,he2019linked,HWplane,hahn2020mustafin}. Our work may also be viewed as a generalisation of Mustafin varieties which is of independent interest.\\
In \cref{sec-con}, we construct Mustafin degenerations as schemes over a discrete valuation ring. We are particularly interested in their special fibres which may be viewed as the "limit" of these families. We show in \cref{thm-main1} that Mustafin degenerations are integral, flat and projective schemes. Furthermore, we show that their special fibres are connected, generically reduced and we give an upper bound on the number of irreducible components of this limit. Further, we show that for $d=4$, $l_i=3$ and $A_i$ generic, the Mustafin degenerations of these multi-view varieties are Cohen-Macaulay with reduced generic fibre. In \cref{prop-class}, we study irreducible component sof the special fibre of Mustafin degenerations and obtain a partial classification. Finally, in \cref{ex-genericin} we show that the generic initial ideal of multiview varieties studied in \cite{aholt2013hilbert} is attained as the limit of a Mustafin degeneration.


\subsection*{Acknowledgements} We would like to thank Bernd Sturmfels and Annette Werner for many helpful conversations regarding this work. Parts of this work were completed with support of the LOEWE research unit Uniformized Structures in Arithmetic and Geometry.

\section{Preliminaries}
In this section, we review the necessary prerequisites for this paper.
\subsection{Images of rational maps}
A main observation for this paper, is that the vision map $f_{\underline{A}}$ factorises as follows
\begin{equation}
\label{equ-diagram}
\begin{tikzcd}
\mathbb{P}^{d-1}\arrow[dashed,"g_{\underline{A}}"]{rr} \arrow[dashed,"f_{\underline{A}}",swap]{drr}& & \prod_{i=1}^n\mathbb{P}\left(\faktor{K^d}{\mathrm{Ker}(A_i)}\right)\arrow["{(A_1,\dots,A_n)}"]{d}\\
& &  \prod_{i=1}^n\mathbb{P}^{l_i-1}
\end{tikzcd}
\end{equation}
where the horizontal map $g_{\underline{A}}$ is the natural quotient map in each factor. We observe that the vertical morphism $(A_1,\dots,A_n)$ induces a linear isomorphism between $\overline{\mathrm{Im}(g_{\underline{A}})}$ and $M_{\underline{A}}$.\\
A conceptual study of varieties of the form $\overline{\mathrm{Im}(g_{\underline{A}})}$ was undertaken in \cite{Lirational}. In particular, the results in \cite{Lirational} may be rephrased to an explicit description of the Chow class of $M_{\underline{A}}\subset\prod\mathbb{P}^{l_i-1}$. In order to give this description, recall that the Chow ring of $\prod\mathbb{P}^{l_i-1}$ is given by
\begin{equation}
A\left(\prod_{i=1}^n\mathbb{P}^{l_i-1}\right)=\faktor{\mathbb{Z}[H_1,\dots,H_n]}{\langle H_1^{l_1},\dots,H_n^{l_n}\rangle}
\end{equation}
where $H_i$ is represents the hyperplane class in the $i-$th factor. Now, let $I\subset\{1,\dots,n\}$ and denote $d_I=\bigcap_{i\in I}\mathrm{ker}(A_i)$. We define the set
\begin{equation}
M(p)\coloneqq\{\underline{m}\in\mathbb{Z}^n_{\ge0}\mid d-\sum_{i\in I}m_i>d_I,\,\sum_{i=1}^nm_i=p\}.
\end{equation}
Furthermore, we define
\begin{equation}
p_0=\mathrm{max}_{M(p)\neq\emptyset}(p).
\end{equation}

The following is a reformulation of \cite[Theorem 1.1]{Lirational}.

\begin{theorem}[{\cite[Theorem 1.1]{Lirational}}]
\label{thm-bing}
In the notation above, let the base field $K$ be infinite. Then, we have $\mathrm{dim}(M_{\underline{A}})=p_0$ and its Chow class in $\prod\mathbb{P}^{l_i-1}$ is given by
\begin{equation}
\sum_{\underline{m}\in M(p_0)}\prod H_i^{l_i-1-m_i}.
\end{equation} 
\end{theorem}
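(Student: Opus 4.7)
The plan is to transfer the computation to $\overline{\mathrm{Im}(g_{\underline{A}})}$ via~\eqref{equ-diagram}, and then to read off the Chow class by intersecting with generic products of linear subspaces. Since the vertical morphism $(A_1,\ldots,A_n)$ in~\eqref{equ-diagram} is a product of linear isomorphisms, it identifies $\overline{\mathrm{Im}(g_{\underline{A}})}$ with $M_{\underline{A}}$ and identifies the corresponding hyperplane classes, so it suffices to compute $[M_{\underline{A}}]$ directly.

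For $\underline{m}\in\mathbb{Z}_{\ge 0}^n$ with $m_i\le l_i-1$ and $\sum m_i = \dim M_{\underline{A}}$, let $c_{\underline{m}}$ denote the coefficient of $\prod_i H_i^{l_i-1-m_i}$ in $[M_{\underline{A}}]$. Since $K$ is infinite, Kleiman's transversality theorem gives $c_{\underline{m}} = \#(M_{\underline{A}}\cap\prod_i L_i)$ for generic linear subspaces $L_i\subset\mathbb{P}^{l_i-1}$ of codimension $m_i$. The preimage of $\prod L_i$ under $f_{\underline{A}}$ is the linear subspace $W_{\underline{m}}:=\bigcap_i A_i^{-1}(L_i)\subset\mathbb{P}^{d-1}$, and since a generic $\prod L_i$ avoids the boundary $\overline{M_{\underline{A}}}\setminus f_{\underline{A}}(\mathbb{P}^{d-1}\setminus\bigcup_i\mathbb{P}(\ker A_i))$, the count $c_{\underline{m}}$ equals the number of distinct $f_{\underline{A}}$-images of points of $W_{\underline{m}}$ outside the indeterminacy locus.

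The key computation is the dimension of $\bigcap_{i\in I}A_i^{-1}(L_i)$ for each $I\subseteq\{1,\ldots,n\}$. Writing $L_i=\ker C_i$ for a generic surjection $C_i\colon K^{l_i}\to K^{m_i}$, this intersection is the kernel of the composition $K^d\xrightarrow{(A_i)_{i\in I}}\prod_{i\in I}K^{l_i}\xrightarrow{\prod C_i}\prod_{i\in I}K^{m_i}$. An inductive rank argument, adding one generic linear form at a time, shows the composition achieves rank $\min(\sum_{i\in I}m_i,\,d-d_I)$, so
\[
\dim\bigcap_{i\in I}A_i^{-1}(L_i)=\max\bigl(d-1-\textstyle\sum_{i\in I}m_i,\ d_I-1\bigr)
\]
projectively. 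Since $\mathbb{P}(\bigcap_{i\in I}\ker A_i)\subseteq\bigcap_{i\in I}A_i^{-1}(L_i)$ always, the strict inequality $d-\sum_{i\in I}m_i>d_I$ in the definition of $M(p)$ expresses exactly that this intersection \emph{properly} contains $\mathbb{P}(\bigcap_{i\in I}\ker A_i)$.

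If $\underline{m}\in M(p_0)$, then for each $j$ the subset constraint for $I=\{1,\ldots,n\}\setminus\{j\}$ combined with $m_j\le l_j-1$ forces $W_{\underline{m}}\not\subseteq\mathbb{P}(\ker A_j)$; hence $W_{\underline{m}}$ meets the domain of $f_{\underline{A}}$. Being an irreducible linear subspace whose $f_{\underline{A}}$-image lies in the $0$-dimensional set $M_{\underline{A}}\cap\prod L_i$, it collapses to a single point, giving $c_{\underline{m}}=1$. Conversely, if some subset constraint fails with $d-\sum_{i\in I}m_i\le d_I$, then $\bigcap_{i\in I}A_i^{-1}(L_i)=\mathbb{P}(\bigcap_{i\in I}\ker A_i)\subseteq\mathbb{P}(\ker A_j)$ for every $j\in I$, so $W_{\underline{m}}$ lies in the indeterminacy locus and $c_{\underline{m}}=0$. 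The dimension identity $\dim M_{\underline{A}}=p_0$ follows: nonvanishing of $[M_{\underline{A}}]$ forces some $c_{\underline{m}}\ne0$ with $\underline{m}\in M(\dim M_{\underline{A}})$, giving $\dim M_{\underline{A}}\le p_0$, while the existence of $\underline{m}\in M(p_0)$ with $c_{\underline{m}}=1$ gives the reverse inequality. The main technical obstacle is ensuring the transversality conclusions hold for \emph{all} subsets $I$ simultaneously under a single generic choice of the $C_i$, which is handled by the inductive Bertini-type argument available over an infinite field.
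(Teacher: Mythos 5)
The paper does not actually prove this statement: \cref{thm-bing} is quoted verbatim from \cite[Theorem 1.1]{Lirational}, and the only addition Hahn makes is the one-sentence observation that Li's argument, written there for algebraically closed base fields, carries over unchanged to any infinite field. So there is no in-paper proof to compare against, and I can only assess your argument on its own merits. Your overall strategy---read off each coefficient $c_{\underline{m}}$ by slicing with a generic product of linear subspaces, then reduce to a dimension count on the linear preimage $W_{\underline{m}}$---is the natural one and is very likely the spirit of Li's proof.

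However, the central dimension claim you rely on is false. You assert that for generic $C_i$ the stacked map $(C_iA_i)_{i\in I}$ has rank $\min\bigl(\sum_{i\in I}m_i,\ d-d_I\bigr)$, equivalently that $\bigcap_{i\in I}A_i^{-1}(L_i)$ has affine dimension $\max\bigl(d-\sum_{i\in I}m_i,\ d_I\bigr)$. This ignores the contribution of intermediate subsets: the correct generic dimension is $\max_{J\subseteq I}\bigl(d_J-\sum_{i\in I\setminus J}m_i\bigr)$, with $d_\emptyset=d$. For a concrete failure, take $d=6$, $l_1=l_2=l_3=3$, with $A_1,A_2$ of rank~$3$ sharing a common $3$-dimensional kernel $V$ and $A_3$ of rank~$3$ with $\ker A_3\cap V=0$, and $m_1=m_2=m_3=2$. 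For $I=\{1,2,3\}$ your formula predicts dimension $\max(6-6,0)=0$, but in fact $\ker(C_1A_1)\cap\ker(C_2A_2)=V$ for generic $C_1,C_2$ (modulo $V$ these kernels become two generic lines in $K^6/V\cong K^3$, which meet only at $0$), and then $V\cap\ker(C_3A_3)=(A_3|_V)^{-1}(\ker C_3)$ is a line---matching the $J=\{1,2\}$ term $d_{\{1,2\}}-m_3=1$. As a consequence your argument that $c_{\underline{m}}=0$ when some constraint at $I$ fails does not go through as written: the asserted equality $\bigcap_{i\in I}A_i^{-1}(L_i)=\mathbb{P}\bigl(\bigcap_{i\in I}\ker A_i\bigr)$ can fail for an arbitrary failing $I$ (it fails for $I=\{1,2,3\}$ above). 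The argument can be rescued by choosing an inclusion-minimal failing $I$---minimality forces $d_J-\sum_{i\in I\setminus J}m_i<d-\sum_{i\in I}m_i\le d_I$ for every $J\subsetneq I$, so the $\max_J$ formula does collapse to $d_I$---but you would then still have to prove the $\max_J$ formula itself, which your ``inductive rank argument, adding one generic linear form at a time'' only gestures at, and which is genuinely subtler than the $\min$-formula it purports to establish (the step-by-step surjectivity of the projections $\pi_i$ restricted to the partial intersection is exactly what can fail). There is also a mild ordering issue in the dimension part: you invoke avoidance of the boundary of $M_{\underline{A}}$ by a generic slice of codimension $p_0$ before $\dim M_{\underline{A}}=p_0$ is known; this is fixable by slicing in codimension $\dim M_{\underline{A}}$ throughout and then extracting both inequalities from the $c_{\underline{m}}$ analysis, but as written the logic is circular.
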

We note that while \cite[Theorem 1.1]{Lirational} is originally stated for algebraically closed fields, the proof is valid for any infinite field.\\

\subsection{Bruhat--Tits building of $\mathrm{GL}(V)$}
We now fix $K$ a discretely valued field with ring of integers $\mathcal{O}_K$, uniformiser $\pi$, maximal ideal $\mathfrak{m}_K$ and residue field $k\cong\faktor{\mathcal{O}_K}{\mathfrak{m}_K}$. Readers that are not familiar with valued field may think of $K=\mathbb{Q}((t))=\{\sum_{i\ge n}a_it^i\mid n\in\mathbb{Z}\}$, which yields $\mathcal{O}_K=\mathbb{Q}[[t]]=\{\sum_{i\ge n}a_it^i\mid n\in\mathbb{Z}_{\ge0}\}$, $\mathfrak{m}_K=\{\sum_{i\ge n}a_it^i\mid n\in\mathbb{Z}_{>0}\}$ and $k\cong\mathbb{Q}$.

Bruhat--Tits buildings are geometric realisations of simplicial complexes associated to reductive algebraic groups. In this paper, we will focus only on the Bruhat--Tits building $\mathfrak{B}_d$ associated to $\mathrm{GL}(V)$ for a $K-$vector space $V$ of dimension $d$. More precisely, we will only consider the integral points of these buildings. We begin with the following definition.

\begin{definition}
We call a free rank $d$ $\mathcal{O}_K-$submodule $L$ of $V$ a \textit{lattice}. Furthermore, let $M\subset L$ a free submodule. If $\faktor{L}{M}$ is a free module, we call $\faktor{L}{M}$ a \textit{quotient lattice}.\\
Furthermore, we define the \textit{homothety class} of a lattice $L$ by
\begin{equation}
    [L]=\{\alpha \cdot L\mid \alpha\in K^\ast\}.
\end{equation}

For $d=\mathrm{dim}(V)$, we denote the set of homothety classes of lattices in $V$ by $\mathfrak{B}_d^0$. We call two homothety classes $[L],[M]\in\mathfrak{B}_d^0$ adjacent if there exist representatives $L\in[L],M\in[M]$, such that $\pi M\subset L\subset M$.\\
Similarely, we define the homothety class of a quotient lattice $\faktor{L}{M}$ by
\begin{equation}
    \left[\faktor{L}{M}\right]=\left\{\faktor{\alpha\cdot L}{\alpha\cdot M}\right\}\mid \alpha\in K^\ast\}.
\end{equation}
\end{definition}

The set $\mathfrak{B}_d^0$ may be viewed as the integral points of the Bruhat--Tits building $\mathfrak{B}_d$ associated to $\mathrm{GL}(V)$.  The adjacency relation on $\mathfrak{B}_d^0$ induces a simplicial complex structure on $\mathfrak{B}_d^0$, i.e. a subset $\Gamma\subset\{[L_1],\dots,[L_n]\}\subset\mathfrak{B}_d^0$ forms a simplex, if its elements are pairwise adjacent. An important notion in the theory of Bruhat--Tits buildings is that of convexity.

\begin{definition}
We call a subset $\Gamma\subset\mathfrak{B}_d^0$ \textit{convex} if for all $[L],[L']\in\Gamma$, we have $[\pi^aL\cap\pi^bL']\in\Gamma$ for all $a,b\in\mathbb{Z}$.\\
For a subset $\Gamma\subset\mathfrak{B}_d^0$, we define its \textit{convex hull} $\mathrm{conv}(\Gamma)$ to be the smallest convex set in $\mathfrak{B}_d^0$ containing $\Gamma$.
\end{definition}

%

\subsection{Mustafin varieties}
\label{subsec-must}
We now discuss the basic notions revolving around \textit{Mustafin varieties} (see also \cite{cartwright2011mustafin,hahn2017mustafin}). Let $V$ be a vector space of dimension $d$ over $K$ and $L$ a lattice. We further define
\begin{equation}
\mathbb{P}(V)=\mathrm{Proj}\mathrm{Sym} V^\ast\quad\textrm{and}\quad \mathbb{P}(L')=\mathrm{Proj}\mathrm{Sym}\left(\mathrm{Hom}_{\mathcal{O}_K}(L,\mathcal{O}_K)\right),
\end{equation}

i.e. we define projective spaces to parametrise lines (and not hyperplanes). We are now ready to define Mustafin varieties.

\begin{definition}
\label{def:musta}
Let $\Gamma=\{[L_1],\dots,[L_n]\}$ be a set of homothety classes of rank $d$ lattice classes in $V$. Then $\mathbb{P}(L_1),\dots,\allowbreak\mathbb{P}(L_n)$ are projective spaces over $R$ whose generic fibres are canonically isomorphic to $\mathbb{P}(V)\simeq\mathbb{P}^{d-1}_{K}$. The open immersions
\begin{equation}
\mathbb{P}(V)\hookrightarrow\mathbb{P}(L_i)
\end{equation}
give rise to a map
\begin{equation}
\label{equ:mapmust}
f_{\Gamma}:\mathbb{P}(V)\longrightarrow\mathbb{P}(L_1)\times_R\dots\times_R\mathbb{P}(L_n).
\end{equation}
We denote the closure of the image endowed with the reduced scheme structure by $\mathcal{M}(\Gamma)$. We call $\mathcal{M}(\Gamma)$ the \textit{associated Mustafin variety}. Its special fibre $\mathcal{M}(\Gamma)_k$ is a reduced scheme over $k$ by \cite[Theorem 2.3]{cartwright2011mustafin}.\end{definition}

There is a natural way to choose coordinates on Mustafin varieties. For this, we fix a reference lattices $L=\mathcal{O}_Ke_1+\dots+\mathcal{O}_Ke_d$, where $e_1,\dots,e_d$ is the standard basis of $V$. For $L_1,\dots,L_n$ as in \cref{def:musta}, we can find $g_1,\dots,g_n\in\mathrm{PGL}(V)$ such that $g_iL=L_i$. We consider the commutative diagram
\begin{equation}
\begin{tikzcd}
\mathbb{P}(V)\arrow{rr}{(g_1^{-1},\dots,g_n^{-1})\circ\Delta} \arrow{d} &  & \mathbb{P}(V)^n\arrow{d}\\
\prod_{R}\mathbb{P}(L_i) \arrow{rr}{(g_1^{-1},\dots,g_n^{-1})} &  & \mathbb{P}(L)^n.
\end{tikzcd}
\end{equation}
Let $x_1,\dots,x_d$ be the coordinates on $\mathbb{P}(L)$ and consider the projections
\begin{equation}
P_j:\mathbb{P}(L)^n\to\mathbb{P}(L)
\end{equation}
to the $j-$th factor. Then, we denote $x_{ij}=P_j^*x_i$ and observe that the Mustafin variety $\mathcal{M}(\Gamma)$ is isomorphic to the subscheme of $\mathbb{P}(L)^n$ cut out by
\begin{equation}
\label{equ-idmust}
I_2\begin{pmatrix}
g_1\begin{pmatrix}
x_{11}\\
\vdots\\
x_{d1}
\end{pmatrix} &
\cdots
& g_n\begin{pmatrix}
x_{1n}\\
\vdots\\
x_{dn}
\end{pmatrix}
\end{pmatrix}\cap \mathcal{O}_K[(x_{ij})].
\end{equation}
By
\begin{equation}
p_j=\restr{P_j}{\mathcal{M}(\Gamma)}:\mathcal{M}(\Gamma)\hookrightarrow\mathbb{P}(L)^n\to\mathbb{P}(L)
\end{equation}
we denote the projection to the $j-$th component. We write $x_{ij}$ also for the induced rational function on $\mathcal{M}(\Gamma)$.

\begin{theorem}[{\cite{mustafin1978nonarchimedean,fa,cartwright2011mustafin,hahn2017mustafin}}]
\label{thm-must}
\begin{enumerate}
 \item For a finite set of lattice classes $\Gamma\subset\mathfrak{B}_d^0$, the Mustafin variety $\mathcal{M}(\Gamma)$ is an integral, normal, Cohen-Macaulay scheme which is flat and projective over $R$. Its generic fiber is isomorphic to $\mathbb{P}^{d-1}_{\mathbb{K}}$ and its special fiber is reduced, Cohen-Macaulay and connected. All irreducible components are rational varieties and their number is at most ${n+d-2 \choose d-1}$, where $n=|\Gamma|$.
 \item If $\Gamma$ is a convex set in $\mathfrak{B}_d^0$, then the Mustafin variety is regular and its special fiber consists of $n$ smooth irreducible components that intersect transversely. In this case the reduction complex of $\mathcal{M}(\Gamma)$ is induced by the simplicial subcomplex of $\mathfrak{B}_d$ induced by $\Gamma$.
 \item An irreducible component mapping birationally to the special fiber of $\mathbb{P}(L_i)$ is called a \textit{primary component}. The other components are called \textit{secondary components}. For each $i=1,\dots,n$ there exists such a primary component. A projective variety $X$ arises as a primary component for some subset $\Gamma\subset\mathfrak{B}_d$ if and only if $X$ is the blow-up of $\mathbb{P}_k^{d-1}$ at a collection of $n-1$ linear subspaces.
 \item Let $C$ be a secondary component of $\mathcal{M}(\Gamma)_k$. There exists an element $v$ in $\mathrm{conv}(\Gamma)$, such that 
 \begin{equation}
 \mathcal{M}(\Gamma\cup\{v\})_k\longrightarrow\mathcal{M}(\Gamma)_k
 \end{equation}
  restricts to a birational morphism $\tilde{C}\rightarrow C,$  where $\tilde{C}$ is the primary component of $\mathcal{M}(\Gamma\cup\{v\})_k$ corresponding to $v$.
  \item The irreducible components of $\mathcal{M}(\Gamma)_k$ are of the form $\overline{\mathrm{Im}(g)}$ where
  \begin{equation}
  \begin{tikzcd}
      g\colon \mathbb{P}(k^d) \arrow[dashed]{r}& \prod_{i=1}^n\mathbb{P}\left(\faktor{k^d}{V_i}\right)
  \end{tikzcd}
  \end{equation}
    for $V_i\subset k^d$.
  \end{enumerate}
\end{theorem}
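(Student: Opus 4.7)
The plan is to assemble the five assertions from the cited works (\cite{mustafin1978nonarchimedean,fa,cartwright2011mustafin,hahn2017mustafin}) by indicating the key technique for each part. The starting point throughout is the explicit description of $\mathcal{M}(\Gamma)$ in \eqref{equ-idmust} as the vanishing locus of the $2\times 2$ minors of a matrix with entries in $\mathcal{O}_K[(x_{ij})]$.

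For part (1), integrality and projectivity follow immediately from the definition as the scheme-theoretic closure of the image of $\mathbb{P}(V)$ in $\prod\mathbb{P}(L_i)$; flatness over $R$ is automatic because any $R$-subscheme of a flat $R$-scheme which is the closure of its generic fibre over a DVR is flat; and the generic fibre is $\mathbb{P}(V)\cong\mathbb{P}^{d-1}_K$ by construction. Reducedness of the special fibre is the main analytic input: the Gröbner argument of Cartwright--Sturmfels exhibits a suitable term order for which the initial ideal of \eqref{equ-idmust} is squarefree, whence $\mathcal{M}(\Gamma)_k$ is reduced. Cohen--Macaulayness propagates from fibres to the total space by flatness, connectedness follows from Zariski's connectedness theorem applied to the proper flat morphism $\mathcal{M}(\Gamma)\to\mathrm{Spec}(R)$ with integral total space, and rationality of the components together with the bound $\binom{n+d-2}{d-1}$ follows by a combinatorial count of which Chow classes can be realised by components.

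For parts (2)--(4), I would leverage the Bruhat--Tits structure. When $\Gamma$ is convex, each apartment furnishes a local toric model for $\mathcal{M}(\Gamma)$ whose combinatorics match the simplicial subcomplex spanned by $\Gamma$, yielding regularity, semistable reduction, and the identification of the reduction complex. For each $i$, the projection $p_i\colon\mathcal{M}(\Gamma)_k\to\mathbb{P}(L_i)_k$ admits a birational component by surjectivity, giving a primary component; its identification as a blow-up of $\mathbb{P}^{d-1}_k$ along a configuration of linear subspaces is the main result of \cite{hahn2017mustafin}, obtained by analysing the exceptional loci of $p_i$ in terms of the reductions of the remaining $L_j$ modulo $\pi$. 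Given a secondary component $C$, one builds a refining lattice class $v$ so that $C$ appears as the primary component corresponding to $v$ in $\mathcal{M}(\Gamma\cup\{v\})_k$; convexity of the Bruhat--Tits building forces $v\in\mathrm{conv}(\Gamma)$.

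Part (5) follows by reducing the factorisation analogous to \eqref{equ-diagram} modulo $\pi$: each irreducible component of $\mathcal{M}(\Gamma)_k$ is the closure of the image of a rational map $\mathbb{P}(k^d)\dashrightarrow\prod\mathbb{P}(k^d/V_i)$, where $V_i$ encodes which part of $L_i$ degenerates on that component. The main obstacle in a self-contained treatment is part (1), specifically the reducedness of the special fibre and the sharp bound on the number of components, both of which require the Gröbner-theoretic machinery of \cite{cartwright2011mustafin}; once these are available, the remaining parts follow by combining them with Bruhat--Tits combinatorics and standard properties of scheme-theoretic images.
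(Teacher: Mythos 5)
This theorem is not proved in the paper: it is a recalled background result, and the bracketed citation to \cite{mustafin1978nonarchimedean,fa,cartwright2011mustafin,hahn2017mustafin} serves as the paper's ``proof.'' Your sketch is therefore really a roadmap to those sources rather than a substitute for something the paper does, and on the whole it points at the right places: the Gröbner-degeneration argument of Cartwright--Sturmfels for flatness, reducedness and the component bound in (1); Bruhat--Tits combinatorics for (2)--(4); and reduction modulo $\pi$ of the factorisation for (5).

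Two imprecisions are worth flagging. First, you attribute the blow-up characterisation of primary components in (3) to \cite{hahn2017mustafin}; in fact that result is due to Cartwright--Sturmfels, while \cite{hahn2017mustafin} is the source for the description of components in (5) (the $X_{[L],\Gamma}$-machinery the paper later invokes via its Lemmas~3.3 and~3.7). Second, your chain of implications for (1) has a small gap: a squarefree initial ideal yields reducedness of the special fibre, but not by itself its Cohen--Macaulayness, which you then want to propagate to the total space via flatness. In Cartwright--Sturmfels, Cohen--Macaulayness of the special fibre requires a separate argument beyond squarefreeness of the initial ideal, and your sketch silently elides this step.
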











\section{Constructing one-parameter families of generalised multi-view varieties}
\label{sec-con}

In this section, we introduce our main object of study, i.e. \textit{Mustafin degenerations of generalised multiview varieties}. To begin with, let $L'=\faktor{L}{M}$ be a quotient lattice and define

\begin{equation}
    \mathrm{Proj}\mathrm{Sym}(\mathrm{Hom}(L',\mathcal{O}_K)).
\end{equation}

The following definition introduces Mustafin degenerations of generalised multiview varieties.

\begin{definition}
Let $\Gamma=\left\{\left[\faktor{L_1}{M_1}\right],\dots,\left[\faktor{L_n}{M_n}\right]\right\}$ be a set of quotient lattices. Then, for each $i$, there is a canonical rational map $f_i:\mathbb{P}(V)\dashrightarrow\mathbb{P}\left(\faktor{L_i}{M_i}\right)$ onto the generic fibre of $\mathbb{P}\left(\faktor{L_i}{M_i}\right)$. Thus, we obtain a rational map
\begin{equation}
\begin{tikzcd}
\underline{f}:\mathbb{P}(V) \arrow[dashed,"{(f_1,\dots,f_n)}"]{rr} & & \prod_{i=1}^n\mathbb{P}\left(\faktor{L_i}{M_i}\right)
\end{tikzcd}
\end{equation}
We endow $\overline{\mathrm{Im}(\underline{f})}$ with the reduced scheme structure. We call this scheme the \textit{Mustafin degeneration associated to} $\Gamma$ and denote it by $\mathcal{M}(\Gamma)$.
\end{definition}

We note that when $M_i=\langle0\rangle$, this definition agrees with \cref{def:musta}.  Furthermore, we observe the following.

\begin{corollary}
Let $\Gamma=\left\{\left[\faktor{L_1}{M_1}\right],\dots,\left[\faktor{L_n}{M_n}\right]\right\}$ be a set of quotient lattices. Furthermore, let $V_i=\langle M_i\rangle_K$.
Then, the generic fibre $\mathcal{M}(\Gamma)_K$ is isomorphic to any generalised multi-view variety $M_{\underline{A}}$ with $\mathrm{ker}(A_i)=V_i$. Therefore, we call $\mathcal{M}(\Gamma)$ a Mustafin degeneration of the generalised multi-view varieties $M_{\underline{A}}$ with $\mathrm{ker}(A_i)=V_i$.
\end{corollary}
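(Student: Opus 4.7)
The plan is to reduce the statement directly to the factorisation diagram \eqref{equ-diagram} together with a basic computation of generic fibres of projectivisations of quotient lattices.

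First, I would compute the generic fibre of each factor $\mathbb{P}(L_i/M_i)$. Because $L_i$ and $M_i$ are free $\mathcal{O}_K$-modules with $L_i/M_i$ also free, base-changing to $K$ gives
\begin{equation}
(L_i/M_i) \otimes_{\mathcal{O}_K} K \;\cong\; V/V_i,
\end{equation}
where $V_i = \langle M_i\rangle_K$. Hence the generic fibre of $\mathbb{P}(L_i/M_i)$ is canonically $\mathbb{P}(V/V_i)$, and the canonical rational map $f_i \colon \mathbb{P}(V) \dashrightarrow \mathbb{P}(L_i/M_i)$ restricts on the generic fibre to the natural projection $\mathbb{P}(V) \dashrightarrow \mathbb{P}(V/V_i)$ induced by the quotient $V \twoheadrightarrow V/V_i$.

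Second, I would put this together over all $i$. The generic fibre of $\underline{f}$ is exactly the product of these quotient maps, which is precisely the map $g_{\underline{A}}$ appearing in the factorisation \eqref{equ-diagram} for any choice of $A_i$ with $\ker(A_i) = V_i$. Taking closures commutes with restriction to the generic fibre (since $\mathcal{M}(\Gamma)$ is by construction the scheme-theoretic closure, endowed with the reduced structure, of the image of a rational map from a $K$-variety), so
\begin{equation}
\mathcal{M}(\Gamma)_K \;=\; \overline{\mathrm{Im}(\underline{f})}_K \;=\; \overline{\mathrm{Im}(g_{\underline{A}})}.
\end{equation}

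Finally, I would invoke the observation recorded in \cref{sec-con} that the vertical morphism $(A_1,\dots,A_n)\colon \prod\mathbb{P}(V/\ker(A_i)) \to \prod \mathbb{P}^{l_i-1}$ in \eqref{equ-diagram} is a linear isomorphism onto its image, and its image contains $M_{\underline{A}}$. Consequently this linear map restricts to an isomorphism $\overline{\mathrm{Im}(g_{\underline{A}})} \iso M_{\underline{A}}$, and composing with the identification above yields the desired isomorphism $\mathcal{M}(\Gamma)_K \cong M_{\underline{A}}$. There is no real obstacle; the only point to be careful about is the verification that the generic fibre of the integral model $\mathbb{P}(L_i/M_i)$ is indeed $\mathbb{P}(V/V_i)$, which follows from freeness of the quotient lattices built into the definition.
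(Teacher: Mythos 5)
Your proof is correct and follows the same route the paper implicitly takes: the paper records this corollary without a separate proof because it is an immediate consequence of the factorisation diagram \eqref{equ-diagram}, the definition of $\mathcal{M}(\Gamma)$ as the closure of the image of $\underline{f}$, and the observation that $(A_1,\dots,A_n)$ restricts to a linear isomorphism $\overline{\mathrm{Im}(g_{\underline{A}})} \iso M_{\underline{A}}$. You have simply filled in the two small verifications the paper leaves tacit — that $(L_i/M_i)\otimes_{\mathcal{O}_K} K \cong V/V_i$ by freeness, and that restricting the closure to the open generic fibre gives the closure inside the generic fibre — and both are correct.
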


As a next step, we choose coordinates on $\mathcal{M}(\Gamma)$ similar to the procedure of choosing coordinates on Mustafin varieties discussed in \cref{subsec-must}, i.e. $\mathcal{M}(\Gamma)$ for $\Gamma=\left\{\left[\faktor{L_1}{M_1}\right],\dots,\left[\faktor{L_n}{M_n}\right]\right\}$ and $M_i=(0)_{\mathcal{O}_K}$. For this, we fix a reference lattice $L\in\mathfrak{B}_d^0$ and $\Gamma=\left\{\left[\faktor{L_1}{M_1}\right],\dots,\left[\faktor{L_n}{M_n}\right]\right\}$ a set of quotient lattices. Furthermore, we consider $g_i\in\mathrm{GL}(V)$, such that $g_iL=L_i$. This, induces a surjective homomorphism
\begin{equation}
g_i':L\to \faktor{L_i}{M_i}
\end{equation}
Denoting $N_i=\mathrm{ker}(g_i')$, the homomorphism $g_i'$ induces the isomorphism
\begin{equation}
\tilde{g}_i:\faktor{L}{N_i}\to\faktor{L_i}{V_i}
\end{equation}

We denote $W_i=N_i\otimes K$ and obtain the following commutative diagram
\begin{equation}
\begin{tikzcd}
\mathbb{P}(V) \arrow[dashed,"{\Delta\circ(g_1^{-1},\dots,g_n^{-1})}"]{rr} \arrow[dashed,"\underline{f}"]{d} & & \prod_{K}\mathbb{P}\left(\faktor{V}{W_i}\right) \arrow{d} \\
\prod_{\mathcal{O}_K}\mathbb{P}\left(\faktor{L_i}{M_i}\right) \arrow["{(\tilde{g}_1^{-1},\dots,\tilde{g}_n^{-1})}"]{rr} & & \prod_{\mathcal{O}_K}\mathbb{P}\left(\faktor{L}{N_i}\right)
\end{tikzcd}
\end{equation}
where the vertical map on the right indicates the open immersions $\mathbb{P}\left(\faktor{V}{W_i}\right)\to\mathbb{P}\left(\faktor{L}{N_i}\right)$ for each factor. Then, $\mathcal{M}(\Gamma)$ is isomorphic to $\overline{\mathrm{Im}(\Delta\circ(g_1^{-1},\dots,g_n^{-1}))}\subset\prod_{\mathcal{O}_K}\mathbb{P}\left(\faktor{L}{N_i}\right)\cong\prod_{i=1}^{n}\mathbb{P}_{\mathcal{O}_K}^{d-1-\mathrm{rank}(M_i)}$ endowed with the reduced scheme structure.

\begin{example}
\label{ex-1}
Let $K=\mathbb{C}((t))=\{\sum_{i=n}^\infty\alpha_it^i\mid n\in\mathbb{Z}\}$, then we have $\mathcal{O}_K=\mathbb{C}[[t]]=\{\sum_{i=n}^\infty\alpha_it^i\mid n\in\mathbb{Z}_{\ge0}\}$, $k=\mathbb{C}$ and let $e_1,e_2,e_3$ be the standard basis of $K^3$. We fix the following three lattices
\begin{align}
&L_1=\mathcal{O}_K e_1+\mathcal{O}_K e_2+\mathcal{O}_K e_3, \quad M_1=\mathcal{O}_K e_3,\quad L_2=\mathcal{O}_K e_1+\mathcal{O}_K e_2+\mathcal{O}_K t^{-1}e_3,\\
&M_2=\mathcal{O}_K e_2,\quad  L_3=\mathcal{O}_K e_1+\mathcal{O}_K t^{-1}e_2+\mathcal{O}_K e_3,\quad  M_3=\mathcal{O}_K e_1
\end{align}

and  
\begin{equation}
\Gamma=\left\{\left[\faktor{L_1}{M_1}\right],\left[\faktor{L_2}{M_2}\right],\left[\faktor{L_3}{M_3}\right]\right\}.
\end{equation}

Then, in the notation of the above choice of coordinates, we choose the reference lattice as $L=L_1$ and
\begin{equation}
    g_1=\mathrm{id},\quad g_2=\begin{pmatrix}
    1 & 0 & 0\\
    0 & 1 & 0\\
    0 & 0 & t^{-1}
    \end{pmatrix},\quad
    g_3=\begin{pmatrix}
    1 & 0 & 0\\
    0 & t^{-1} & 0\\
    0 & 0 & 1
    \end{pmatrix}.
\end{equation}

Thus, we have $N_1=\mathcal{O}_Ke_3$, $N_2=\mathcal{O}_Ke_2$ and $N_3=\mathcal{O}_Ke_1$. Then, again in the notation of the above choice of coordinates, we have that $\mathcal{M}(\Gamma)$ is isomorphic to the closure of the image of
\begin{equation}
\begin{tikzcd}
    f\colon\mathbb{P}^2_K \arrow[dashed]{r}& \mathbb{P}^1_{\mathcal{O}_K}\times\mathbb{P}^1_{\mathcal{O}_K}\times\mathbb{P}^1_{\mathcal{O}_K}\\
    (x_0\colon x_1\colon x_2) \arrow[mapsto]{r}&  \left(A_1\begin{pmatrix} x_0\\x_1\\x_2\end{pmatrix},A_2\begin{pmatrix} x_0\\x_1\\x_2\end{pmatrix},A_2\begin{pmatrix} x_0\\x_1\\x_2\end{pmatrix}\right)
\end{tikzcd}
\end{equation}

for
\begin{equation}
    A_1=\begin{pmatrix} 1 & 0 & 0\\ 0 & 1 & 0 \end{pmatrix},\quad A_2=\begin{pmatrix} 1 & 0 & 0\\ 0 & 0 & t \end{pmatrix},\quad A_3=\begin{pmatrix} 0 & t & 0\\ 0 & 0 & 1 \end{pmatrix}.
\end{equation}

Let $\left(x_{ij}\right)_{\substack{i=1,2\\j=1,2,3}}$ be the coordinates of $\mathbb{P}^1_{\mathcal{O}_K}\times\mathbb{P}^1_{\mathcal{O}_K}\times\mathbb{P}^1_{\mathcal{O}_K}$. Then, we have that 
\begin{equation}
    \mathcal{M}(\Gamma)\subset\mathbb{P}^1_{\mathcal{O}_K}\times\mathbb{P}^1_{\mathcal{O}_K}\times\mathbb{P}^1_{\mathcal{O}_K}
\end{equation}
 is cut out by the ideal
\begin{equation}
    \langle t^2x_{21}x_{12}x_{23}-x_{11}x_{22}x_{13}\rangle_{\mathcal{O}_K[x_{ij}]}.
\end{equation}
The special fibre $\mathcal{M}(\Gamma)_k$ is cut out by
\begin{equation}
    \langle x_{11}x_{22}x_{13}\rangle_{k[x_{ij}]}=\langle x_{11}\rangle_{k[x_{ij}]} \cap \langle x_{22}\rangle_{k[x_{ij}]} \cap \langle x_{13}\rangle_{k[x_{ij}]},
\end{equation}
i.e. $\mathcal{M}(\Gamma)_k$ is the union of three copies of $\mathbb{P}^1\times\mathbb{P}^1$.
\end{example}

Before deriving our first main result, we need the following definition.

\begin{definition}
Let $\Gamma=\left\{[\faktor{L_1}{M_1}],\dots,\faktor{L_n}{M_n}\right\}$ be a set of quotient lattices and let $V_i=\langle M_i\rangle_K\subset V$ be the subspace spanned by $M_i$. We then set
\begin{equation}
d_I=\mathrm{dim}\bigcap_{i\in I}V_i
\end{equation}
for all $I\subset \{1,\dots,n\}$. We further define for each $r\in \mathbb{Z}_{\ge0}$ the set
\begin{equation}
    N(r)\coloneqq\{\underline{m}\in\mathbb{Z}_{\ge0}\mid d-\sum_{i\in I} m_i>d_I\}.
\end{equation}
Let $r_0$ be the maximal $r$, such that $N(r)$ is non-zero. We then define $N_\Gamma\coloneqq N(r_0)$.
\end{definition}

The following is our first main result, where we collect the geometric properties of $\mathcal{M}(\Gamma)$.

\begin{theorem}
\label{thm-main1}
Let $\Gamma=\left\{\left[\faktor{L_1}{M_1}\right],\dots,\left[\faktor{L_n}{M_n}\right]\right\}$ be a set of quotient lattices. Then $\mathcal{M}(\Gamma)$ is an integral, flat and projective scheme over $\mathrm{Spec}(\mathcal{O}_K)$. Moreover, its special fibre $\mathcal{M}(\Gamma)_k$ is connected, equidimensional of dimension $\mathcal{M}(\Gamma)_K$ and generically reduced. Moreover, it has at most $|N_\Gamma|$ irreducible components.\\
Finally, if $\mathrm{rank}(L_i)=4$ and $\langle M_i\rangle_K$ are generic subspaces of $V$ of dimension $1$, then $\mathcal{M}(\Gamma)_k$ is reduced and Cohen-Macaulay.
\end{theorem}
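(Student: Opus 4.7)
The plan is to relate $\mathcal{M}(\Gamma)$ to the classical Mustafin variety $\mathcal{M}(\tilde\Gamma)$ attached to $\tilde\Gamma = \{[L_1], \ldots, [L_n]\}$ and transport properties through the natural projection. Explicitly, the quotient maps $\mathbb{P}(L_i) \dashrightarrow \mathbb{P}(L_i/M_i)$ fit together into a dominant rational map $\Phi \colon \mathcal{M}(\tilde\Gamma) \dashrightarrow \mathcal{M}(\Gamma)$ whose restriction to the generic fibre is the second stage of the factorisation \eqref{equ-diagram}. After resolving its base locus I would obtain a proper surjective morphism $\tilde\Phi \colon X \to \mathcal{M}(\Gamma)$ from a blow-up of $\mathcal{M}(\tilde\Gamma)$, and $X$ inherits reducedness, Cohen--Macaulayness and connectedness of the special fibre from \cref{thm-must}.

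With this in place the first block of assertions is essentially formal. By construction $\mathcal{M}(\Gamma)$ is integral (closure of the image of an irreducible scheme with the reduced structure) and projective (closed in $\prod \mathbb{P}(L_i/M_i)$), and it is flat over the DVR $\mathcal{O}_K$ because integral $\mathcal{O}_K$-schemes surjecting onto $\mathrm{Spec}(\mathcal{O}_K)$ are automatically $\pi$-torsion free. Flatness gives $\dim \mathcal{M}(\Gamma)_k = \dim \mathcal{M}(\Gamma)_K$, and since the special fibre is cut out by the non-zero-divisor $\pi$ on an integral scheme, it is equidimensional of this dimension. Connectedness of $\mathcal{M}(\Gamma)_k$ follows from connectedness of $\mathcal{M}(\tilde\Gamma)_k$ (\cref{thm-must}) combined with the surjectivity of $\tilde\Phi_k$. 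Generic reducedness would be extracted by observing that every generic point of $\mathcal{M}(\Gamma)_k$ is dominated by the generic point of a reduced component of $\mathcal{M}(\tilde\Gamma)_k$, together with a Gr\"obner-style argument adapting \cite{cartwright2011mustafin} to check that the scheme structures match at those generic points.

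For the bound on the number of irreducible components I would use a multi-degree argument. By flatness, $[\mathcal{M}(\Gamma)_k] = [M_{\underline A}]$ in $A(\prod \mathbb{P}^{l_i-1}_k)$, which by \cref{thm-bing} equals $\sum_{\underline m \in N_\Gamma} \prod H_i^{l_i-1-m_i}$, a sum of $|N_\Gamma|$ distinct monomials each with coefficient $1$. By \cref{thm-must}(5), the components of $\mathcal{M}(\tilde\Gamma)_k$ are images of rational maps $g \colon \mathbb{P}(k^d) \dashrightarrow \prod \mathbb{P}(k^d/V_i)$; composing with the projections induced by the reductions $\bar M_i$ and applying \cref{thm-bing} to the subspaces $V_i + \bar M_i \subset k^d$ shows that each irreducible component of $\mathcal{M}(\Gamma)_k$ has multi-degree of the form $\prod H_i^{l_i-1-m_i}$ with $\underline m \in N_\Gamma$. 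Matching terms in this Chow class identity yields the bound.

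The last assertion is where the genuine difficulty lies. In the pinhole setup with $d=4$ and the $\langle M_i \rangle_K$ generic one-dimensional subspaces, my plan is to exhibit an explicit Gr\"obner basis of the defining ideal of $\mathcal{M}(\Gamma)$ with respect to a monomial order refining the $\pi$-adic weight, along the lines of \cite{aholt2013hilbert}. The initial ideal should cut out $\mathcal{M}(\Gamma)_k$ and be Cohen--Macaulay via an unmixedness argument governed by the combinatorial structure of the components identified in the previous paragraph; reducedness then follows automatically, since a generically reduced Cohen--Macaulay scheme is reduced. The main obstacle I anticipate is controlling this initial ideal: generic configurations of $1$-dimensional kernels can still produce intricate intersection patterns among the components of the special fibre, and verifying Cohen--Macaulayness may demand a finer case analysis depending on how the residue classes of the $M_i$ interact.
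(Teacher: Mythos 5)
Your proposal diverges from the paper's proof in several important ways, and at two points the detour introduces genuine gaps.

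The central organizational choice — factoring everything through a rational map $\Phi\colon\mathcal{M}(\tilde\Gamma)\dashrightarrow\mathcal{M}(\Gamma)$ and a resolution $X\to\mathcal{M}(\Gamma)$ — is not in the paper and is not needed. The paper argues directly on $\mathcal{M}(\Gamma)$: integrality and projectivity are by construction, flatness is torsion-freeness over a DVR (as you also note), and connectedness of the special fibre follows by applying Zariski's connectedness principle \emph{directly} to $\mathcal{M}(\Gamma)$, via $\mathcal{O}_{\mathcal{M}(\Gamma)}(\mathcal{M}(\Gamma)_K)=K$ and $\mathcal{O}_{\mathcal{M}(\Gamma)}(\mathcal{M}(\Gamma))=\mathcal{O}_K$. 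Your indirect route has a gap: even granting that $\tilde\Phi$ is proper and surjective, you still need $X_k$ to be connected, and connectedness of $\mathcal{M}(\tilde\Gamma)_k$ does not transfer to $X_k$ without a further argument (the blow-up locus can lie inside the special fibre, and you have not controlled how the exceptional divisors meet the strict transforms). This is exactly the kind of complication that the direct Zariski-connectedness argument avoids.

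For generic reducedness you again reach for the $\Phi$ detour plus a vague ``Gr\"obner-style argument.'' But you already have the key fact in hand: the Chow class of $\mathcal{M}(\Gamma)_k$ equals $\sum_{\underline m\in N_\Gamma}\prod H_i^{l_i-1-m_i}$, a sum of distinct monomials each with coefficient $1$. Since each irreducible component contributes an effective cycle, this single identity gives \emph{both} the bound $|N_\Gamma|$ on the number of components \emph{and} generic reducedness (no component can appear with multiplicity $\ge 2$, or a coefficient would exceed $1$). The appeal to \cref{thm-must}(5) and to the structure of the components of $\mathcal{M}(\tilde\Gamma)_k$ is unnecessary and again leans on the unestablished relation $\Phi$.

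The most serious gap is the final assertion. You propose to exhibit an explicit Gr\"obner basis of the defining ideal of $\mathcal{M}(\Gamma)$ and verify Cohen--Macaulayness of the initial ideal by an unmixedness argument, and you yourself flag that you do not know how to control this. The paper sidesteps the entire problem: by flatness, $\mathcal{M}(\Gamma)_k$ is a $k$-valued point of the Hilbert scheme $\mathcal{H}_n$ studied in \cite{aholt2013hilbert}, and \cite[Corollary 6.1]{aholt2013hilbert} asserts that \emph{every} point of $\mathcal{H}_n$ corresponds to a reduced Cohen--Macaulay scheme. This gives reducedness and Cohen--Macaulayness of the special fibre in one line; Cohen--Macaulayness of $\mathcal{M}(\Gamma)$ itself then follows from the fibrewise criterion for a flat family. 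Without this input your plan is an open-ended programme, not a proof.
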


\begin{proof}
The scheme $\mathcal{M}(\Gamma)$ is projective and reduced by construction. Since $\mathcal{O}_K$ is a discrete valuation ring, flatness follows as $\mathcal{M}(\Gamma)$ is reduced with non-empty generic fibre \cite[Proposition 4.3.9]{liubook}. As $\mathcal{M}(\Gamma)$ is flat, its special fibre is equidimensional of the same dimension as the generic fibre \cite[Proposition 4.416]{liubook}.\\
In order to prove that $\mathcal{M}(\Gamma)$ is connected we use Zariski's connectedness principle \cite[Theorem 5.3.15]{liubook}. In our situation, this means that once we to prove $\mathcal{O}_{\mathcal{M}(\Gamma)}(\mathcal{M}(\Gamma)_K)=K$ and $\mathcal{O}_{\mathcal{M}(\Gamma)}(\mathcal{M}(\Gamma))=\mathcal{O}_K$, the connectedness of $\mathcal{M}(\Gamma)_k$ follows. We see that $\mathcal{M}(\Gamma)_K$ is a projective variety, therefore $\mathcal{O}_{\mathcal{M}(\Gamma)}(\mathcal{M}(\Gamma)_K)=\mathcal{O}_{\mathcal{M}(\Gamma)_K}(\mathcal{M}(\Gamma)_K)=K$ follows immediately. Moreover, by \cite[Theorem 5.3.2 (a)]{liubook} we have that $\mathcal{O}_{\mathcal{M}(\Gamma)}(\mathcal{M}(\Gamma)_K)$ is a finitely generated $\mathcal{O}_K$ module and it is contained in $\mathcal{O}_{\mathcal{M}(\Gamma)}(\mathcal{M}(\Gamma)_K)=K$. Since $\mathcal{O}_K$ is integrally closed, we have $\mathcal{O}_{\mathcal{M}(\Gamma)}(\mathcal{M}(\Gamma))=\mathcal{O}_K$.\\
Let $\mathrm{rank}(L_i))=l_i$, then by \cref{thm-bing} we have that the Chow class of $\mathcal{M}(\Gamma)_K$ is
\begin{equation}
\label{equ-proofclass}
    \prod_{\underline{m}\in N_\Gamma} H_i^{l_i-1-m_i}.
\end{equation}
As $\mathcal{M}(\Gamma)_k$ is a specialisation of $\mathcal{M}(\Gamma)_K$, we have by \cite[Section 20.3]{fulton2013intersection} that the Chow class of $\mathcal{M}(\Gamma)_k$ agrees with that of $\mathcal{M}(\Gamma)_K$. Moreover, the Chow class of $\mathcal{M}(\Gamma)_k$ is the sum of the Chow classes of its components. As the Chow class of each component is effective, this is a sum non-negative monomials in the $H_i$. Therefore, the number of components is at most the number of terms in \cref{equ-proofclass}, which is equal to the cardinality of $N_\Gamma$. Moreover, since the coefficients of all monomials in \cref{equ-proofclass}, the special fibre $\mathcal{M}(\Gamma)_k$ is generically reduced.\\
Finally, let $\mathrm{rank}(L_i)=4$ and $\langle M_i\rangle_K$ generic subspaces of $V$ of dimension $1$. In this case, $\mathcal{M}(\Gamma)_K$ (resp. $\mathcal{M}(\Gamma)_k$) is a $K-$valued (resp. $k$-valued) point of the Hilbert scheme $\mathcal{H}_n$ studied in \cite{aholt2013hilbert}. By \cite[Corollary 6.1]{aholt2013hilbert}, the scheme corresponding to each such point is reduced and Cohen-Macaulay. Thus, the special fibre $\mathcal{M}(\Gamma)$ is reduced. As the fibres of $\mathcal{M}(\Gamma)$ are Cohen-Macaulay and the scheme is flat, it follows from \cite[Lemma 37.20.2]{SP} that $\mathcal{M}(\Gamma)$ is Cohen-Macaulay, as well.
\end{proof}

\section{Irreducible components of $\mathcal{M}(\Gamma)_k$}
In this section, we study the irreducible components of $\mathcal{M}(\Gamma)_k$ and give a partial classification of these components.

\begin{lemma}
Let $\Gamma\subset\Gamma'$ be sets of quotient lattices, such that $\mathrm{dim}(\mathcal{M}(\Gamma)_K)=\mathrm{dim}(\mathcal{M}(\Gamma')_K)$. Let $C$ be an irreducible component of $\mathcal{M}(\Gamma)_k$, then there is a unique irreducible component $C'$ of $\mathcal{M}(\Gamma')_k$, such that $C'$ maps birationally onto $C$ via the natural projection $\mathcal{M}(\Gamma')\to\mathcal{M}(\Gamma)$ induced by $\prod_{[L]\in\Gamma'}\mathbb{P}(L)\to\prod_{[L]\in\Gamma}\mathbb{P}(L)$.\\
In particular, for any $\left[\faktor{L_i}{M_i}\right]\in\Gamma$ with $\mathrm{rank}\left(\faktor{L_i}{M_i}\right)=\mathrm{dim}(\mathcal{M}(\Gamma)_K)+1=\mathrm{dim}(\mathcal{M}(\Gamma')_K)+1$, there exists a unique irreducible component $C_i\in\mathcal{M}(\Gamma)_k$, such that $C_i$ projects onto $\mathbb{P}\left(\faktor{L_i}{M_i}\right)_k$ via the natural projection.
\end{lemma}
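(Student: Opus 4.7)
\emph{Plan.} The strategy is to first establish that the induced morphism $\pi\colon\mathcal{M}(\Gamma')\to\mathcal{M}(\Gamma)$ is birational, and then to use a standard argument about birational morphisms of flat families over a DVR to deduce the statement about irreducible components.

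\emph{Birationality of $\pi_K$.} Both generic fibres are closures of images of dominant rational maps $f_\Gamma,f_{\Gamma'}\colon\mathbb{P}(V)\dashrightarrow\prod\mathbb{P}(\faktor{L_i}{M_i})$ defined by linear projections, and by construction $\pi_K\circ f_{\Gamma'}=f_\Gamma$. The nonempty fibres of each of these rational maps are projective linear subspaces of $\mathbb{P}(V)$ of the form $\mathbb{P}(\bigcap_i(Kv_i+W_i))$ with $W_i=\langle M_i\rangle_K$, hence in particular irreducible. The equality $\dim\mathcal{M}(\Gamma)_K=\dim\mathcal{M}(\Gamma')_K$ forces the generic fibres of $f_\Gamma$ and $f_{\Gamma'}$ to have the same dimension. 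Consequently, for a general $q\in\mathcal{M}(\Gamma)_K$, the irreducible fibre $f_\Gamma^{-1}(q)$ is mapped by $f_{\Gamma'}$ into the generically finite set $\pi_K^{-1}(q)$, and a dimension count shows it must be mapped to a single point. Hence $\pi_K$ is generically one-to-one and therefore birational.

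\emph{From $\pi_K$ to $\pi$ on the special fibre.} Let $V\subset\mathcal{M}(\Gamma)_K$ be a dense open on which $\pi_K$ is an isomorphism, and let $Z$ be the closure in $\mathcal{M}(\Gamma)$ of $\mathcal{M}(\Gamma)_K\setminus V$. Because $\mathcal{M}(\Gamma)$ is flat and integral over $\mathcal{O}_K$ by \cref{thm-main1}, $Z$ is horizontal with $\dim Z\le\dim\mathcal{M}(\Gamma)_K$, and therefore $\dim(Z\cap\mathcal{M}(\Gamma)_k)\le\dim\mathcal{M}(\Gamma)_k-1$. Since $\mathcal{M}(\Gamma)_k$ is equidimensional of dimension $\dim\mathcal{M}(\Gamma)_K$ by \cref{thm-main1}, $Z\cap\mathcal{M}(\Gamma)_k$ cannot contain any of its irreducible components. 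Thus $U\coloneqq\mathcal{M}(\Gamma)\setminus Z$ is a dense open subscheme on which $\pi$ is an isomorphism and which meets every component $C$ of $\mathcal{M}(\Gamma)_k$ in a dense open.

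\emph{Bijection of components and the ``in particular''.} For any component $C$ of $\mathcal{M}(\Gamma)_k$, the preimage $\pi^{-1}(C\cap U)\subset\mathcal{M}(\Gamma')_k$ is isomorphic via $\pi$ to $C\cap U$, hence irreducible, and so is contained in a unique component $C'$ of $\mathcal{M}(\Gamma')_k$; the induced map $C'\to C$ is proper and restricts to an isomorphism on $C'\cap\pi^{-1}(U)$, hence is birational. Uniqueness of $C'$ follows because any other component of $\mathcal{M}(\Gamma')_k$ dominating $C$ would also meet $\pi^{-1}(U)$ in a dense open mapping isomorphically onto $C\cap U$, and this forces it to coincide with $C'$. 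Finally, for the ``in particular'', apply the main statement to the inclusion $\{[\faktor{L_i}{M_i}]\}\subset\Gamma$: the rank hypothesis gives $\dim\mathbb{P}(\faktor{L_i}{M_i})_K=\dim\mathcal{M}(\Gamma)_K$, so the dimension condition is met; since $\mathbb{P}(\faktor{L_i}{M_i})_k$ is irreducible, there is a unique lift $C_i\subset\mathcal{M}(\Gamma)_k$ projecting birationally, and in particular surjectively, onto it. The main obstacle is the clean verification of birationality of $\pi_K$, which requires keeping track of the fibres of the defining linear projections; once this is in hand, the remainder is routine birational geometry over $\mathrm{Spec}(\mathcal{O}_K)$.
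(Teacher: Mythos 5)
Your approach is genuinely different from the paper's (the paper proves the lemma via Chow classes: it shows $\eta_*[\mathcal{M}(\Gamma')_k]=[\mathcal{M}(\Gamma)_k]$ by compatibility of specialisation with proper pushforward, then matches monomials using the explicit multidegree from \cref{thm-bing}), but your version has a genuine gap in the passage from the generic fibre to the special fibre.

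The problem is the sentence ``Thus $U\coloneqq\mathcal{M}(\Gamma)\setminus Z$ is a dense open subscheme on which $\pi$ is an isomorphism.'' Knowing that $\pi_K$ restricts to an isomorphism over $V=U_K$ tells you nothing, a priori, about $\pi$ over the special-fibre part $U_k$. The locus $W\subset\mathcal{M}(\Gamma)$ where $\pi$ fails to be a local isomorphism is closed, and your $Z$ is by construction only the horizontal closure of $W\cap\mathcal{M}(\Gamma)_K$; but $W$ can very well have additional \emph{vertical} components lying entirely inside $\mathcal{M}(\Gamma)_k$, and these will survive in $U_k$ after you remove $Z$. Concretely, $\pi$ can contract a component of $\mathcal{M}(\Gamma')_k$ to a lower-dimensional subset of $\mathcal{M}(\Gamma)_k$ that is disjoint from $Z$; in that case $\pi^{-1}(U_k)\to U_k$ is not an isomorphism, yet nothing in your construction excludes it. (This is exactly the blow-up-in-the-special-fibre phenomenon: $\mathrm{Bl}_p\,\mathbb{A}^2_{\mathcal{O}_K}\to\mathbb{A}^2_{\mathcal{O}_K}$ with $p$ a closed point of the special fibre is an isomorphism on the whole generic fibre, so $Z=\emptyset$ and $U$ is everything, but $\pi$ is certainly not an isomorphism on $U$.) Once this step fails, the remaining ``bijection of components'' paragraph has no foundation: $\pi^{-1}(C\cap U)$ need not be irreducible, and nothing guarantees uniqueness of the component dominating $C$.

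Your opening paragraph, establishing that $\pi_K$ is birational via the linearity and irreducibility of the fibres of the projections, is fine as far as it goes, and is in fact a fact the paper uses implicitly. But birationality of $\pi_K$ alone is strictly weaker than what the lemma asserts, precisely because of the vertical-modification issue above. What the paper buys by working with Chow classes and specialisation is exactly the control over the special fibre that your open set $U$ fails to provide: since $\mathcal{M}(\Gamma')_k$ is generically reduced with multidegree having all coefficients equal to $1$, each monomial in the multidegree is ``owned'' by exactly one component, and $\eta_*$ carries the monomial of $C'$ to the monomial of $C$ with coefficient $1$, forcing $\eta|_{C'}\to C$ to be birational and $C'$ to be unique. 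If you want to salvage your approach, you would need a separate argument that $\pi$ has no vertical exceptional locus, which is not routine; the Chow-theoretic route sidesteps this entirely.
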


\begin{proof}
Let $\Gamma=\left\{\left[\faktor{L_1}{M_1}\right],\dots,\left[\faktor{L_n}{M_n}\right]\right\}$, $\Gamma'=\left\{\left[\faktor{L_1}{M_1}\right],\dots,\left[\faktor{L_{n'}}{M_{n'}}\right]\right\}$ and $V_i=\left\langle M_i\right\rangle_{K}$. Recall, that the generic fibre of $\mathcal{M}(\Gamma')$ (resp. $\mathcal{M}(\Gamma))$ is given by the closure of the image of the rational map $\mathbb{P}(V)\dashrightarrow\prod_{i=1}^{n'}\mathbb{P}(\faktor{V}{V_i})$ (resp. $\mathbb{P}(V)\dashrightarrow\prod_{i=1}^{n}\mathbb{P}\left(\faktor{V}{V_i}\right)$). We see immediately that the map $\mathbb{P}(V)\dashrightarrow\prod_{i=1}^{n}\mathbb{P}\left(\faktor{V}{V_i}\right)$ factors via
\begin{equation}
    \begin{tikzcd}
    \mathbb{P}(V) \arrow[dashed]{r} \arrow[dashed]{dr}& \prod_{i=1}^{n'}\mathbb{P}\left(\faktor{V}{V_i}\right) \arrow["\eta"]{d}\\
    & \prod_{i=1}^{n}\mathbb{P}\left(\faktor{V}{V_i}\right)
    \end{tikzcd}
\end{equation}
where $\eta$ is the projection onto the first $n$ factors. Since the $\eta$ is a closed morphism it maps $\mathcal{M}(\Gamma')_K$ onto $\mathcal{M}(\Gamma)_K$. Moreover, it pushes the Chow class of $\mathcal{M}(\Gamma')_K$ forward to the Chow class of $\mathcal{M}(\Gamma)_K$. Therefore, under specialisation the Chow class of  $\mathcal{M}(\Gamma')_k$ pushes forward to the class of $\mathcal{M}(\Gamma)_k$.\\
We now observe that if $\underline{m}\in N(\Gamma)$, then $(\underline{m},0,\dots,0)\in N(\Gamma')$. Let $C$ be an irreducible component of $\mathcal{M}(\Gamma)_k$ contributing the monomial
\begin{equation}
\label{equ-monpr1}
    \prod_{i=1}^{n} H_i^{l_i-1-m_i}
\end{equation}
to the Chow class of $\mathcal{M}(\Gamma)_k$. Then, since $(\underline{m},0,\dots,0)\in N(\Gamma')$, there is a unique irreducible component $C'$ of $\mathcal{M}(\Gamma')_k$ which contributes the monomial
\begin{equation}
\label{equ-monpr2}
    \prod_{i=1}^{n}H_i^{l_i-1-m_i}\prod_{j=n+1}^{n'}H_i^{l_i-1}.
\end{equation}
We observe that \cref{equ-monpr2} pushes forward to \cref{equ-monpr1} under the projection $\mathcal{M}(\Gamma')\to\mathcal{M}(\Gamma)$. Since $C$ is the only component of $\mathcal{M}(\Gamma)_k$ contributing the monomial in \cref{equ-monpr1}, it must be the image of $C'$. As the coefficent of \cref{equ-monpr1} in the Chow class of $C$ is $1$, the projection restricted to $C'$ is birational. This finishes the proof of the first assertion.\\
The second assertion follows from the first assertion in the case $\Gamma=\left\{\left[\faktor{L_i}{M_i}\right]\right\}$.
\end{proof}

Motivated by the above lemma, we give the following definition.

\begin{definition}
\label{def-comp}
Let $\Gamma=\left\{\left[\faktor{L_1}{M_1}\right],\dots,\left[\faktor{L_n}{M_n}\right]\right\}$ be a set of quotient lattices and let $C$ be an irreducible component of $\mathcal{M}(\Gamma)_k$. We call $C$ \textit{primary}, if it projects birationally onto $\mathbb{P}\left(\faktor{L_i}{M_i}\right)_k$ for some $i$ under the map $\mathcal{M}(\Gamma)\to\mathbb{P}\left(\faktor{L_i}{M_i}\right)$.\\
We further call $C$ \textit{secondary}, if it is not a primary component of $\mathcal{M}(\Gamma)_k$ and if there exists a class of quotient lattice $\left[\faktor{L}{M}\right]$, such that there exists a primary component of $\mathcal{M}\left(\Gamma\cup\left\{\left[\faktor{L}{M}\right]\right\}\right)_k$ projecting birationally onto $\mathbb{P}\left(\faktor{L}{M}\right)$, also projects birationally onto $C$ under $\mathcal{M}\left(\Gamma\cup\left\{\left[\faktor{L}{M}\right]\right\}\right)\to\mathcal{M}(\Gamma)$.\\
We call all other irreducible components of $\mathcal{M}(\Gamma)_k$ \textit{tertiary}.
\end{definition}

\begin{remark}
We note that our terminology in \cref{def-comp} is inspired from the notions of primary, secondary and tertiary components in \cite{haebich}, where Mustafin degenerations of flag varieties were studied.
\end{remark}

Our next goal is the following proposition, where we classify the primary and secondary components of $\mathcal{M}(\Gamma)_k$.

\begin{proposition}
\label{prop-class}
Let $\Gamma$ be a finite set of quotient lattices. The primary and secondary components of $\mathcal{M}(\Gamma)_k$ are generalised multi-view varieties over $k$.
\end{proposition}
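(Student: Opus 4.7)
The plan is to handle primary components directly and reduce the secondary case to the primary one. \emph{Primary case.} Let $C_i$ be a primary component of $\mathcal M(\Gamma)_k$, so the projection $\pi_i\colon\mathcal M(\Gamma)_k \to \mathbb P(\faktor{L_i}{M_i})_k$ restricts to a birational map on $C_i$. For each $j$, composing the birational inverse of $\pi_i|_{C_i}$ with $\pi_j|_{C_i}$ yields a rational map $\phi_{ij}\colon \mathbb P(\faktor{L_i}{M_i})_k \dashrightarrow \mathbb P(\faktor{L_j}{M_j})_k$, and $C_i$ is the closure of the image of the tuple $(\phi_{i1},\dots,\phi_{in})$. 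The core of the primary argument is the claim that each $\phi_{ij}$ is given by the projectivisation of a $k$-linear map $\bar A_{ij}\colon L_i/M_i \otimes k \to L_j/M_j \otimes k$; granted this, $C_i$ is precisely a generalised multi-view variety over $k$ with projection matrices $(\bar A_{i1},\dots,\bar A_{in})$. The hypothesis $l_j \le l_i$ needed for the generalised multi-view setup is automatic: for $C_i$ to be primary one has $\dim\mathcal M(\Gamma)_K = l_i - 1$, while $\dim\mathcal M(\Gamma)_K \ge l_j - 1$ always holds since the $j$-th projection of $\mathcal M(\Gamma)_K$ surjects onto $\mathbb P(\faktor{L_j}{M_j})_K$.

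\emph{Linearity of $\phi_{ij}$.} I would choose the reference lattice $L = L_i$ so that $g_i = \id$ and $N_i = M_i$ in the coordinate description of \cref{sec-con}. For each $j$, rescale $g_j^{-1}\in\mathrm{GL}(V)$ by the unique power $\pi^{n_{ij}}$ for which $\pi^{n_{ij}}g_j^{-1} \in M_d(\mathcal O_K) \setminus \pi M_d(\mathcal O_K)$; reducing modulo $\pi$ and descending through the quotients $L_i/M_i \otimes k$ and $L_j/M_j \otimes k$ yields a $k$-linear matrix $\bar A_{ij}$. To identify $\phi_{ij}$ with $\mathbb P(\bar A_{ij})$ I would take the scheme-theoretic closure of the graph of $\underline f$ inside $\mathbb P(L_i)\times_{\mathcal O_K}\prod_j\mathbb P(\faktor{L_j}{M_j})$ and argue that its restriction to $C_i$ on the special fibre is exactly the graph of $(\mathbb P(\bar A_{ij}))_j$; the defining integral equations of this closure come precisely from the rescaled matrices $\pi^{n_{ij}}g_j^{-1}$.

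\emph{Secondary case.} Let $C$ be secondary. By \cref{def-comp} there is a quotient lattice class $[\faktor{L}{M}]$ and a primary component $\tilde C$ of $\mathcal M(\Gamma \cup \{[\faktor{L}{M}]\})_k$ projecting birationally both onto $\mathbb P(\faktor{L}{M})_k$ and onto $C$ via the forgetful morphism that drops the last factor. The primary case applied to $\tilde C$ presents $\tilde C$ as the closure of the image of
\[
\Psi\colon \mathbb P(\faktor{L}{M})_k \dashrightarrow \prod_{j=1}^n \mathbb P(\faktor{L_j}{M_j})_k \times \mathbb P(\faktor{L}{M})_k
\]
given componentwise by $k$-linear maps. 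Dropping the last factor of $\Psi$ yields a rational map $\mathbb P(\faktor{L}{M})_k \dashrightarrow \prod_{j=1}^n \mathbb P(\faktor{L_j}{M_j})_k$ whose closure of image is $C$, exhibiting $C$ as a generalised multi-view variety.

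\emph{Main obstacle.} The delicate step is the linearity of $\phi_{ij}$, i.e., confirming that the rescaling $\pi^{n_{ij}}g_j^{-1}$ captures the full content of the specialisation on the primary component and that no higher-order $\pi$-adic corrections survive after restricting the scheme-theoretic closure of the graph of $\underline f$ to $C_i$. In effect, one must pin down which branch of the generic graph of $\underline f$ lies above $C_i$ on the special fibre, and this is naturally governed by the choice of reference lattice $L_i$.
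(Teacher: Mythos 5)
Your high-level strategy (choose $L = L_i$, rescale by a power of $\pi$, reduce mod $\pi$) matches the paper's, but the route is framed differently. You start from the abstract primary component $C_i$ and try to show the transition maps $\phi_{ij}$ are linear; the paper instead builds the candidate component $X_{[L],\Gamma}$ directly as the closure of the image of explicit $k$-linear maps $(H_1,\dots,H_n)\circ\Delta$ (\cref{con:var}), invokes \cref{lem-comp1} to know it is a component of $\mathcal{M}(\Gamma)_k$ when dimensions match, and observes that since $H_i$ is surjective (because $g_i=\mathrm{id}$), $X_{[L],\Gamma}$ projects onto $\mathbb{P}(L/N_i)_k$ and is therefore the unique $i$-th primary component. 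In the paper's argument the generalised multi-view structure is built in from the start, so there is nothing left to prove about transition maps.

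There is also a concrete technical issue in your rescaling step, beyond the gap you yourself flag. You propose to rescale $g_j^{-1}$ as a full $d\times d$ matrix into $M_d(\mathcal O_K)\setminus \pi M_d(\mathcal O_K)$ and then reduce mod $\pi$ and ``descend through the quotients.'' In \cref{con:var} the rescaling is instead applied to the composite matrix $G_j$ representing $L \to L/N_j$ (i.e.\ after projecting away $N_j$). These normalisations can give different powers of $\pi$: if the rows of $g_j^{-1}$ that get projected out have the smallest valuations, your rescaled $g_j^{-1}$ becomes divisible by $\pi$ after projection, and the reduction mod $\pi$ gives the zero matrix, not the correct $H_j$. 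Independently, the step ``descending through $L_i/M_i\otimes k$ and $L_j/M_j\otimes k$'' needs justification: a priori the reduction of the rescaled $g_j^{-1}$ need not send $M_i\otimes k$ into $M_j\otimes k$, so the induced map on quotients is not automatically well-defined. The paper sidesteps both problems by mapping from $\mathbb{P}(L)_k$ (so no descent through $L_i/M_i$ is needed) and by rescaling the already-composed matrix. Your secondary-case reduction is essentially identical to the paper's; the issues are confined to the primary case.
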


Our first step towards proving \cref{prop-class} is the following construction.

\begin{construction}
\label{con:var}
Let $\Gamma=\left\{\left[\faktor{L_1}{M_1}\right],\dots,\left[\faktor{L_n}{M_n}\right]\right\}$ be a finite set of quotient-lattices. Let $L$ be the reference lattice as in the choice of coordinates on $\mathcal{M}(\Gamma)$ above, $g_i\in\mathrm{GL}(V)$ with $g_iL=L_i$ and $N_i=g_i^{-1}(M_i)$. Then, the rational map
\begin{equation}
\begin{tikzcd}
    \mathbb{P}(V) \arrow[dashed,"{(\tilde{g}_1,\dots,\tilde{g}_n)}\circ \Delta"]{rrr}& & & \mathbb{P}\left(\faktor{L}{N_i}\right)
\end{tikzcd}
\end{equation}
extends uniquely to a rational map
\begin{equation}
\begin{tikzcd}
    \mathbb{P}(L) \arrow[dashed]{rrr} & & & \mathbb{P}\left(\faktor{L}{N_i}\right).
\end{tikzcd}
\end{equation}
In coordinates this extension may be described as follows. Let $G_i$ be a matrix representation of the module homomorphism
\begin{equation}
    \tilde{g}_i\colon L\to\faktor{L}{N_i}.
\end{equation}
We define 
\begin{equation}
s_i=\mathrm{min}_{\mu,\nu=1,\dots,d}(\mathrm{val}((G_i)_{\mu\nu}))
\end{equation}
 and $G_i'=\pi^{-s_i}G_i$. Then $G_i'$ is defined over $\mathcal{O}_K$, but $\pi^{-1}G_i'$ is not. We first note that $G_i'$ and $G_i$ induce the same morphism
 \begin{equation}
 \begin{tikzcd}
  \tilde{G}_i\colon\mathbb{P}(L) \arrow[dashed]{r} & \mathbb{P}\left(\faktor{L}{N_i}\right)
 \end{tikzcd}
 \end{equation}
 since they lie in the same equivalence class of $\mathrm{PGL}(V)$. Moreover, for the map over the generic fibre we have that $\tilde{G}_{i|\mathbb{P}(L)_K}\equiv\tilde{g}_i$, while the map of the map $\tilde{G}_{i|\mathbb{P}(L)_k}$ over the special fibre is given by the matrix $H_i=G_i'\,\mathrm{mod}\, \pi$ over $k$.\\
 We then consider the map over the residue field
 \begin{equation}
 \begin{tikzcd}
      (H_1,\dots,H_n)\circ\Delta\colon\mathbb{P}(L)_k \arrow[dashed,"{(H_1,\dots,H_n)\circ\Delta}"]{rrr} & & & \prod_{i=1}^n\mathbb{P}\left(\faktor{L}{N_i}\right)_k
 \end{tikzcd}
 \end{equation}
and denote $X_{[L],\Gamma,\underline{g}}=\overline{\mathrm{Im}((H_1,\dots,H_n)\circ\Delta)}$.
\end{construction}

\begin{lemma}
\label{lem-comp1}
Let $\Gamma=\left\{\left(\faktor{L_1}{M_1}\right],\dots,\left[\faktor{L_n}{M_n}\right]\right\}$, $L$ a reference lattice and $g_i\in\mathrm{GL}(V)$ with $g_iL=L_i$. Then, we have
\begin{equation}
    X_{[L],\Gamma,\underline{g}}\cong X_{[L],\Gamma,\underline{h}}.
\end{equation}
for any $\underline{h}=(h_1,\dots,h_n)\in\mathrm{GL}(V)^n$ with $h_iL=L_i$. Therefore, we denote $X_{[L],\Gamma}\coloneqq X_{[L],\Gamma,\underline{g}}$.\\
Furthermore, we consider $\mathcal{M}(\Gamma)$ embedded into $\prod_{i=1}^n\mathbb{P}\left(\faktor{L}{N_i}\right)$ via $(\tilde{g}_1,\dots,\tilde{g}_n)\circ\Delta$. Then, we have
\begin{equation}
X_{[L],\underline{g}}\subset\mathcal{M}(\Gamma)_k\subset\prod_{i=1}^n\mathbb{P}\left(\faktor{L}{N_i}\right)_k
\end{equation}
and $X_{[L],\Gamma}$ is an irreducible component of $\mathcal{M}(\Gamma)$ if and only if $\mathrm{dim}(X_{[L],\Gamma})=\mathrm{dim}(\mathcal{M}(\Gamma)_K)=\mathrm{dim}(\mathcal{M}(\Gamma)_k)$.
\end{lemma}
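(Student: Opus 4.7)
My plan addresses the three assertions in sequence, and I outline the key mechanism for each.

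For the independence claim, the crucial observation is that if $g_iL = L_i = h_iL$, then $u_i := h_i^{-1}g_i$ preserves the lattice $L$ and therefore lies in $\mathrm{GL}(L) \simeq \mathrm{GL}_d(\mathcal{O}_K)$. In particular $u_i$ has matrix representation $U_i$ with entries in $\mathcal{O}_K$ and $\det U_i \in \mathcal{O}_K^\times$, and it sends $N_i^{\underline{g}}$ to $N_i^{\underline{h}}$, so it descends to an $\mathcal{O}_K$-linear isomorphism $\bar u_i \colon L/N_i^{\underline{g}} \iso L/N_i^{\underline{h}}$. A direct matrix manipulation with the construction of \cref{con:var} then shows that the normalising valuations $s_i$ coincide for $\underline{g}$ and $\underline{h}$, and that the reductions $H_i^{\underline{g}}$ and $H_i^{\underline{h}}$ differ precisely by the $\mathrm{GL}_d(k)$-elements obtained from $U_i$ modulo $\pi$. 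Consequently the product isomorphism induced by the $\bar u_i$ on the ambient products of projective spaces identifies $X_{[L],\Gamma,\underline{g}}$ with $X_{[L],\Gamma,\underline{h}}$, justifying the notation $X_{[L],\Gamma}$.

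For the containment, I would invoke the defining property of $\mathcal{M}(\Gamma)$ as a scheme-theoretic closure. The extended rational map $\varphi \colon \mathbb{P}(L) \dashrightarrow \prod_i \mathbb{P}(L/N_i)$ from \cref{con:var} has generic fibre equal to the map whose image-closure defines $\mathcal{M}(\Gamma)_K$, so the closure $Z$ of the image of $\varphi$ in $\prod_i \mathbb{P}(L/N_i)$ is an integral closed $\mathcal{O}_K$-subscheme with $Z_K = \mathcal{M}(\Gamma)_K$. Since $\mathcal{O}_K$ is a discrete valuation ring and $\mathcal{M}(\Gamma)$ is by definition the closure of $\mathcal{M}(\Gamma)_K$ in this ambient product, we obtain $Z = \mathcal{M}(\Gamma)$. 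The image of $\varphi|_k$ lies in $Z_k = \mathcal{M}(\Gamma)_k$ (since $\varphi$ is an $\mathcal{O}_K$-morphism on its domain of definition), and therefore its closure $X_{[L],\Gamma}$ is contained in $\mathcal{M}(\Gamma)_k$.

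For the component criterion, $\mathbb{P}(L)_k \simeq \mathbb{P}^{d-1}_k$ is irreducible, so $X_{[L],\Gamma}$, being the closure of the image of a rational map from this source, is itself irreducible. By \cref{thm-main1} the scheme $\mathcal{M}(\Gamma)_k$ is equidimensional of dimension $\dim \mathcal{M}(\Gamma)_K$. Hence an irreducible closed subscheme of $\mathcal{M}(\Gamma)_k$ is an irreducible component precisely when its dimension attains this common value, which is the asserted equivalence.

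The main obstacle is the bookkeeping in the first step: one must carefully track how the normalising exponents $s_i$ and the mod-$\pi$ reductions $H_i$ transform under $\underline{g} \mapsto \underline{h}$, and verify that the resulting change of coordinates is given by invertible matrices over $k$. Once this computation is in place, the remaining assertions are essentially formal: the containment follows from the uniqueness of the closure in $\prod_i \mathbb{P}(L/N_i)$, and the component criterion follows from the equidimensionality already established in \cref{thm-main1}.
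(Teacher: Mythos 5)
The paper's own proof of this lemma is a one-line deferral to the analogous results \cite[Lemma~3.3, Lemma~3.7]{hahn2017mustafin}, so there is no in-text argument to compare against line by line. Your outline is a plausible and essentially correct reconstruction of what that argument must look like, and all three steps are sound. For the independence claim, the key facts are exactly as you state: $u_i = h_i^{-1}g_i$ lies in $\mathrm{GL}(L)\simeq\mathrm{GL}_d(\mathcal{O}_K)$, and since $N_i^{\underline{g}} = g_i^{-1}(M_i)$ and $N_i^{\underline{h}} = h_i^{-1}(M_i)$ one has $u_i(N_i^{\underline{g}}) = N_i^{\underline{h}}$, so $\bar u_i\colon L/N_i^{\underline{g}}\to L/N_i^{\underline{h}}$ is a well-defined $\mathcal{O}_K$-linear isomorphism. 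The relation $G_i^{\underline{h}} = \bar U_i\, G_i^{\underline{g}}\, U_i^{-1}$ with $U_i,\bar U_i$ over $\mathcal{O}_K$ and invertible then forces the minimal entry valuations $s_i$ to agree (apply the relation in both directions) and shows $H_i^{\underline{h}} = \bar U_i^{(k)} H_i^{\underline{g}} (U_i^{(k)})^{-1}$ with $\bar U_i^{(k)}\in\mathrm{GL}(k)$, whence the identification of the two image closures. For the containment, one additionally needs that the domain of definition $U$ of $\varphi$ over $\mathcal{O}_K$ has nonempty (hence dense) special fibre, which follows because $H_i = G_i'\bmod\pi$ is a nonzero matrix by the normalisation $\pi^{-1}G_i'\notin M_{d}(\mathcal{O}_K)$; with this in place your argument that $Z = \mathcal{M}(\Gamma)$ and $X_{[L],\Gamma}\subset Z_k$ goes through. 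The component criterion via irreducibility of $X_{[L],\Gamma}$ and the equidimensionality of $\mathcal{M}(\Gamma)_k$ from \cref{thm-main1} is exactly right. You flag the bookkeeping in the first step as the main obstacle without fully carrying it out, which is the one place where the proposal is a plan rather than a complete proof, but the plan is correct and the details are routine.
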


\begin{proof}
The proof is analogous to that of \cite[Lemma 3.3, Lemma 3.7]{hahn2017mustafin}.
\end{proof}

Before proving \cref{prop-class}, we illustrate \cref{lem-comp1} in the following example.

\begin{example}
We consider the same notation as in \cref{ex-1}. Now, we show that all irreducible components of $\mathcal{M}(\Gamma)_k$ in \cref{ex-1} are varieties of the form $X_{[L],\Gamma}$ for some reference lattice $L$.  First, we fix a new reference lattice $L=\mathcal{O}_Kt^{-1}e_1+\mathcal{O}_Kt^{-1}e_2+\mathcal{O}_Ke_3$. Then, we may choose $g_i\in\mathrm{GL}(V)$ with $g_iL=L_i$ as follows
\begin{equation}
    g_1=\begin{pmatrix} t & 0 & 0\\0 & t & 0\\ 0 & 0 & 1\end{pmatrix},\quad g_2=\begin{pmatrix} t & 0 & 0\\0 & t & 0\\ 0 & 0 & t^{-1}\end{pmatrix},\quad g_3=\begin{pmatrix} t & 0 & 0\\0 & 1 & 0\\ 0 & 0 & 1\end{pmatrix}.
\end{equation}
In the notation of \cref{con:var}, we obtain
\begin{equation}
    \tilde{G}_1=\begin{pmatrix} 1 & 0 & 0\\0 & 1 & 0\end{pmatrix},\quad 
    \tilde{G}_2=\begin{pmatrix} 1 & 0 & 0\\ 0 & 0 & t^{2}\end{pmatrix},\quad 
    \tilde{G}_3=\begin{pmatrix} 0 & 1 & 0\\ 0 & 0 & 1\end{pmatrix}.
\end{equation}
and
\begin{equation}
    H_1=\begin{pmatrix} 1 & 0 & 0\\0 & 1 & 0\end{pmatrix},\quad 
    H_2=\begin{pmatrix} 1 & 0 & 0\\ 0 & 0 & 0\end{pmatrix},\quad 
    H_3=\begin{pmatrix} 0 & 1 & 0\\ 0 & 0 & 1\end{pmatrix}.
\end{equation}
Thus, we obtain $X_{[L],\Gamma}=X_{[L],\Gamma,\underline{g}}$ is the irreducible component of $\mathcal{M}(\Gamma)_k$ cut out by $\langle x_{22}\rangle_{k[x_{ij}]}$. Similarly, we obtain the irreducible component cut out by $\langle x_{11}\rangle_{k[x_{ij}]}$ for $L=\mathcal{O}_Kt^{-1}e_1+\mathcal{O}_Ke_2+\mathcal{O}_Kt^{-1}e_3$ and the irreducible component cut out by $\langle x_{13}\rangle_{k[x_{ij}]}$ for $L=\mathcal{O}_Ke_1+\mathcal{O}_Kt^{-1}e_2+\mathcal{O}_Kt^{-1}e_3$.
\end{example}

\begin{proof}[Proof of \cref{prop-class}]
We first treat the case of primary components. To begin with, let $\Gamma=\left\{\left[\faktor{L_1}{M_1}\right],\dots,\left[\faktor{L_n}{M_n}\right]\right\}$ and let $L=L_i$. Then, in the setting of \cref{con:var} we may choose $g_i=\mathrm{id}$. We then consider the following commutative diagram
\begin{equation}
    \begin{tikzcd}
    \mathbb{P}(L)_k \arrow[dashed,swap,"H_i"]{drr} \arrow[dashed,"{(H_1,\dots,H_n)\circ\Delta}"]{rr}& & \prod_{i=1}^n\mathbb{P}\left(\faktor{L}{N_i}\right)_k \arrow["p_i"]{d}\\
    & & \mathbb{P}\left(\faktor{L}{N_i}\right)_k
    \end{tikzcd}
\end{equation}
where the vertical map $p_i$ is the projection to the $i-$th factor. The key observation is that $H_i$ is a surjective map. Therefore, since $H_i$ and $p_i\circ(H_1,\dots,H_n)\circ\Delta$ coincide on a dense open subset of $\mathbb{P}(L)_k$, we have that $X_{[L],\Gamma}=\overline{\mathrm{Im}((H_1,\dots,H_n)\circ\Delta)}$ projects onto $\mathbb{P}\left(\faktor{L}{N_i}\right)$. Therefore, if $\mathrm{rank}\left(\faktor{L}{N_i}\right)-1=\mathrm{dim}(\mathcal{M}(\Gamma)_k)$, we have that $X_{[L],\Gamma}$ is the unique $i-$th primary component. As $X_{[L],\Gamma}$ is a generalised multi-view variety, this settles the proposition for primary components.\\
Now, let $C$ be a secondary component of of $\mathcal{M}(\Gamma)_k$. Then, there exists an equivalence class of quotient lattices $\faktor{L_{n+1}}{M_{n+1}}$, such that for $\Gamma'=\Gamma\cup\left\{\left[\faktor{L_{n+1}}{M_{n+1}}\right]\right\}$ we have that the $n+1-$st primary components of $\mathcal{M}(\Gamma')_k$ projects onto $C$ under the projection $\prod_{i=1}^{n+1}\mathbb{P}\left(\faktor{L_i}{N_i}\right)\to\prod_{i=1}^{n+1}\mathbb{P}\left(\faktor{L_i}{N_i}\right)$. We now consider the following commutative diagram
\begin{equation}
     \begin{tikzcd}
    \mathbb{P}(L)_k \arrow[dashed,swap,"{(H_1,\dots,H_{n})\circ\Delta}"]{drr} \arrow[dashed,"{(H_1,\dots,H_{n+1})\circ\Delta}"]{rr}& & \prod_{i=1}^{n+1}\mathbb{P}\left(\faktor{L}{N_i}\right)_k \arrow["p_{1,\dots,n}"]{d}\\
    & & \prod_{i=1}^{n}\mathbb{P}\left(\faktor{L}{N_i}\right)_k
    \end{tikzcd}
\end{equation}
where the vertical map is the projection onto the first $n$ factors. By assumption, we know that $X_{[L_{n+1}],\Gamma'}=\overline{\mathrm{Im}((H_1,\dots,H_{n+1})\circ\Delta)}$ projects onto $C$ via $p_{1,\dots,n}$. Therefore, we obtain
\begin{equation}
C=\overline{\mathrm{Im}(p_{1,\dots,n}\circ(H_1,\dots,H_{n+1})\circ\Delta)}=\overline{\mathrm{Im}((H_1,\dots,H_{n})\circ\Delta)}.
\end{equation}
Thus, we see that $C$ is a generalised multiview variety, which finishes the proof.
\end{proof}

\begin{remark}
By the arguments in the proof of \cref{prop-class}, the irreducible components in \cref{ex-1} are secondary components.
\end{remark}

We end this section with the following remark.

\begin{remark}
In \cite{cartwright2011mustafin}, it was proved that all irreducible components are secondary or primary components when $M_i=\langle0\rangle$ for all $i$. By the results in \cite{hahn2017mustafin}, all varieties of the form $X_{[L],\Gamma}$ of the correct dimension are irreducible components of $\mathcal{M}(\Gamma)_k$ for some set of lattices $\Gamma$. It would be interesting to know, whether such a classification extends to the case of $\Gamma$  a set of quotient lattices.
\end{remark}

We finish with the following example.
\begin{example}
\label{ex-genericin}
The main result of \cite{aholt2013hilbert} is that multi-view varieties are dense in an irreducible component of a certain Hilbert scheme $\mathcal{H}_n$. The special fibres of Mustafin degenerations of multi-view varieties will be $k-$valued points of this Hilbert scheme. It would be interesting to know which $k-$valued points are reached. Evidence that all $k-$valued points are reached is given in \cite[corollary 3.9]{cartwright2011mustafin} where -- in a different setting -- this was shown for $\mathrm{rank}(L_i)=3$ and $M_i=\langle0\rangle$. Furthermore, for $\mathrm{rank}(L_i)=4$ and $M_i=\langle0\rangle$ it was shown in \cite[proposition 6.8]{cartwright2011mustafin} that all monomial ideals in $\mathcal{H}_n$ are reached via Mustafin degenerations.\\
In the following, we give further evidence for this claim to hold, by showing that the \textit{generic initial ideal} $M_n$, which lies in every irreducible component of $\mathcal{H}_n$ (see \cite[section 3]{cartwright2010hilbert} and \cite[corollary 6.1]{cartwright2010hilbert}) is attained as a special fibre of a Mustafin degeneation.  The generic initial ideal $M_n\subset\mathbb{C}\left[\left(x_{ij}\right)_{\substack{i=1,2,3\\j=1,\dots,n}}\right]$ is generated by the $\binom{n}{2}$ quadrics $x_{3i}x_{3j}$, the $\binom{n}{3}$ cubics $x_{3i}x_{2j}x_{2l}$ and $\binom{n}{4}$ quartics $x_{2i}x_{2j}x_{2l}x_{2m}$ for pairwise distinct indices $i,j,l,m\in\{1,\dots,n\}$.\\
We fix $K=\mathbb{C}((t))$. As noted above, this yields $\mathcal{O}_K=\mathbb{C}[[t]]$, $\mathfrak{m}_K=\{\sum_{l\ge n}a_lt^l\mid a_l\in\mathbb{C},n\in\mathbb{Z}_{>0}\}$ and $K\cong\mathbb{C}$.
Furthermore, we fix $L=\mathcal{O}_K^4=\mathcal{O}_Ke_1+\dots+\mathcal{O}_Ke_4$ as a reference lattice, where $e_1,\dots,e_4$ is the standard basis. Let $\left(a_{ij}^{(l)}\right)_{\substack{i,j=1,\dots,4\\l=1,\dots,4}}\in\mathbb{Q}$ and consider the following three matrices
\begin{align}
    &A_r=\begin{pmatrix}
    1 \\
    & 1 \\
    & & t\\
    & & & t^2
    \end{pmatrix}
    \begin{pmatrix}
    a_{11}^{(r)} & \hdots & a_{14}^{(r)}\\ 
    \vdots & \ddots & \vdots\\ 
    a_{41}^{(r)} & \hdots & a_{44}^{(r)}
    \end{pmatrix}=
    \begin{pmatrix}
    a_{11}^{(r)} & \hdots & a_{14}^{(r)}\\ 
    a_{21}^{(r)} & \hdots & a_{24}^{(r)}\\ 
    ta_{31}^{(r)} & \hdots & ta_{34}^{(r)}\\ 
    t^2a_{41}^{(r)} & \hdots & t^2a_{44}^{(r)}
    \end{pmatrix}
    \end{align}
for $r=1,\dots,n$. For generic $(a_{ij}^{(l)})_{i,j,l}$, the matrices $A_1,\dots,A_n$ are invertible, i.e. there is a Zarisiki open subset $U\subset \mathbb{C}^{12n}$, such that for all $(a_{ij}^{(l)})_{i,j,l}\in U$, the matrices $A_1,\dots,A_n$ are invertible. We assume that $(a_{ij}^{(l)})_{i,j,l}\in U$ and define $g_i=A_i^{-1}$. We define lattices $L_i=g_iL$. Furthermore, we denote $N_i=\langle e_i\rangle$ and definte $M_i=g_i\cdot e_i$.\\
This yields the set of quotient lattices $\Gamma=\left\{\left[\faktor{L_1}{M_1}\right],\dots,\left[\faktor{L_n}{M_n}\right]\right\}$. As observed in \cref{sec-con}, the Mustafin degeneration $\mathcal{M}(\Gamma)$ is isomorphic to the closure of the following rational map endowed with the reduced scheme structure.
\begin{equation}
\begin{tikzcd}
    \underline{g}^{-1}\circ\Delta\colon\mathbb{P}_K^3 \arrow[dashed,"{(g_1^{-1},\dots,g_n^{-1})\circ\Delta}"]{rrr} & & & \mathbb{P}\left(\faktor{L}{N_1}\right)\times\dots\times\mathbb{P}\left(\faktor{L}{N_n}\right)\cong\mathbb{P}(\tilde{L})^n,
\end{tikzcd}
\end{equation}
where $\tilde{L}=\mathcal{O}_Ke_2+\mathcal{O}_Ke_3+\mathcal{O}_Ke_4$.\\
This may be computed as follows. We define
\begin{align}
&B_r=\begin{pmatrix}
1 \\
& t\\
& & t^2
\end{pmatrix}
\begin{pmatrix}
a_{21}^{(r)} & \hdots & a_{24}^{(r)}\\ 
a_{31}^{(r)} & \ddots & a_{34}^{(r)}\\ 
a_{41}^{(r)} & \hdots & a_{44}^{(r)}
\end{pmatrix}=
\begin{pmatrix}
a_{21}^{(r)} & \hdots & a_{24}^{(r)}\\ 
ta_{31}^{(r)} & \hdots & ta_{34}^{(r)}\\ 
t^2a_{41}^{(r)} & \hdots & t^2a_{44}^{(r)}
\end{pmatrix}
\end{align}
for $r=1,\dots,4$. Now, we consider the map
\begin{equation}
\begin{tikzcd}
    \underline{B}\circ\Delta\colon\mathbb{P}_K^3 \arrow[dashed,"{(B_1,\dots,B_4)\circ\Delta}"]{rrr}& & & \left(\mathbb{P}_K^2\right)^4
\end{tikzcd}
\end{equation}
and denote the coordinates of the $l-$th factor of $\left(\mathbb{P}_K^2\right)^3$ by $x_{12},x_{2l},x_{3l}$. Let $J$ the ideal in $K[x_{ij}]$ cutting out the variety $\overline{\mathrm{Im}(\underline{B}\circ\Delta)}$. We denote $J'=J\cap\mathcal{O}_K[x_{ij}]$ and obtain that $\mathcal{M}(\Gamma)$ is isomorphic the subscheme of $\mathbb{P}(\tilde{L})^n\cong\left(\mathbb{P}_{\mathcal{O}_K}^2\right)^n$ cut out by $J'$. Moreover, the special fibre $\mathcal{M}(\Gamma)_k$ is isomorphic to the subscheme of $\left(\mathbb{P}^2_{k}\right)^n$ cut out by $\tilde{J}=J'_{|t=0}\subset \mathbb{C}[x_{ij}]$.\\
Our next goal is to determine the ideal $\tilde{J}$. For this, we let $I=\{i_1,\dots,i_m\}\subset\{1,\dots,n\}$ and define the matrix
\begin{equation}
B_I=\begin{pmatrix}
B_{i_1} & p_{i_1} & 0 \hdots & & 0\\
B_{i_2} & 0 & p_{i_2} & \ddots & 0\\
\vdots & \vdots & \ddots & \ddots & \vdots\\
B_{i_m} & 0 & \hdots 0 & p_{i_m}\\
\end{pmatrix}
\end{equation}
where $p_{i_j}=(x_{1i_j},x_{2i_j},x_{3i_j})^T$. Then, by \cite[lemma 2.2]{aholt2013hilbert}, the ideal $J$ and therefore the ideal J' contains the maximal minors of the matrices $B_I$ for all $|I|\ge 2$.\\
A straightforward computation shows that the saturation with respect to $t$ of the maximal minor of $B_I$ for $I=\{i,j\}$ is of the form
\begin{equation}
    I_{ij}=\star\cdot x_{3i}x_{3j}+t\cdot h_{ij}(t,a_{ij}^{(l)},x_{ij})
\end{equation}
where $\star$ is a polynomial expression in the $(a_{ij}^{(l)})$ and $h_{ij}(t,a_{ij}^{(l)},x_{ij})$ is a polynomial expression in $t$, the $(a_{ij}^{(l)})$ and the $(x_{ij})$.\\
For $I=\{i,j,l\}$, the saturation with respect to $t$ of the maximal minor of $B_I$ obtained by deleting the $6-$th and $9-$th row is given by 
\begin{equation}
    I_{ijl}=\ast\cdot x_{3i}x_{2j}x_{2l}+t\cdot h_{ijl}(t,a_{ij}^{(l)},x_{ij})
\end{equation}
where $\ast$ is a polynomial expression in the $(a_{ij}^{(l)})$ and $h_{ijl}(t,a_{ij}^{(l)},x_{ij})$ is a polynomial expression in $t$, the $(a_{ij}^{(l)})$ and the $(x_{ij})$.\\
Furthermore, for $I=\{i,j,l,m\}$ the saturation with respect to $t$ of the maximal minor of $B_I$ obtained by deleting the $3$rd, $6-$th, $9-$th and $12-$th row is given by
\begin{equation}
    I_{ijlm}=\dagger\cdot x_{2i}x_{2j}x_{2l}x_{2m}+t\cdot h_{ijlm}(t,a_{ij}^{(l)},x_{ij})
\end{equation}
where $\dagger$ is a polynomial expression in the $(a_{ij}^{(l)})$ and $h_{ijlm}(t,a_{ij}^{(l)},x_{ij})$ is a polynomial expression in $t$, the $(a_{ij}^{(l)})$ and the $(x_{ij})$.\\
After possibly shrinking $U$, we obtain that the polynomial expressions $\star,\ast,\dagger$ do not vanish for all $a_{ij}^{(l)}$ in the Zariski open subset $U\subset\mathbb{C}^{12n}$. We see that the the $\binom{n}{2}$ quadrics $x_{3i}x_{3j}$ are obtained by $(I_{ij})_{|t=0}$, the $\binom{n}{3}$ cubics $x_{3i}x_{2j}x_{2l}$ by $(I_{ijl})_{|t=0}$ and the $\binom{n}{4}$ quartics $x_{2i}x_{2j}x_{2l}x_{2m}$ by $(I_{ijlm})_{|t=0}$. Therefore, we see that $M_n\subset \tilde{J}$. We note that $M_n$ is a radical ideal. Furthermore, by \cref{thm-main1} the ideal $\tilde{J}$ is radical, as well. Since both $M_n$ and $\tilde{J}$ are radical ideals and since by the discussion at the beginning of this example $M_n$ and $\tilde{J}$ share the same Hilbert function, it follows from $M_n\subset \tilde{J}$ that $M_n=\tilde{J}$.\\
Therefore, the generic initial ideal $M_n$ contained in all irreducible components of the Hilbert scheme $\mathcal{H}_n$ may be obtained as a special fibre of a Mustafin degeneration.
\end{example}

\printbibliography

@book{hartley2003multiple,
  title={Multiple view geometry in computer vision},
  author={Hartley, Richard and Zisserman, Andrew},
  year={2003},
  publisher={Cambridge university press}
}

@article{joswig2016rigid,
  title={Rigid multiview varieties},
  author={Joswig, Michael and Kileel, Joe and Sturmfels, Bernd and Wagner, Andr{\'e}},
  journal={International Journal of Algebra and Computation},
  volume={26},
  number={04},
  pages={775--788},
  year={2016},
  publisher={World Scientific}
}

@inproceedings{duff2019plmp,
  title={Plmp-point-line minimal problems in complete multi-view visibility},
  author={Duff, Timothy and Kohn, Kathl{\'e}n and Leykin, Anton and Pajdla, Tomas},
  booktitle={Proceedings of the IEEE International Conference on Computer Vision},
  pages={1675--1684},
  year={2019}
}

@article{ponce2017congruences,
  title={Congruences and concurrent lines in multi-view geometry},
  author={Ponce, Jean and Sturmfels, Bernd and Trager, Mathew},
  journal={Advances in Applied Mathematics},
  volume={88},
  pages={62--91},
  year={2017},
  publisher={Elsevier}
}

@article{agarwal2019ideals,
  title={Ideals of the Multiview Variety},
  author={Agarwal, Sameer and Pryhuber, Andrew and Thomas, Rekha R},
  journal={IEEE Transactions on Pattern Analysis and Machine Intelligence},
  year={2019},
  publisher={IEEE}
}

@article{conca2017cartwright,
  title={Cartwright-Sturmfels ideals associated to graphs and linear spaces},
  author={Conca, Aldo and De Negri, Emanuela and Gorla, Elisa},
  journal={arXiv preprint arXiv:1705.00575},
  year={2017}
}

@article{HWplane,
  title={Strongly semistable reduction of syzygy bundles on plane curves},
  author={Hahn, Marvin Anas and Werner, Annette},
  journal={arXiv preprint arXiv:1907.02466},
  year={2019}
}

@article{cartwright2010hilbert,
  title={The Hilbert scheme of the diagonal in a product of projective spaces},
  author={Cartwright, Dustin and Sturmfels, Bernd},
  journal={International Mathematics Research Notices},
  volume={2010},
  number={9},
  pages={1741--1771},
  year={2010},
  publisher={OUP}
}

@book{liubook,
author={Liu, Qing},
title={{Algebraic Geometry and Arithmetic Curves}},
publisher={Oxford University Press on Demand},
year={2002}
}

@article{aholt2013hilbert,
  title={A Hilbert scheme in computer vision},
  author={Aholt, Chris and Sturmfels, Bernd and Thomas, Rekha},
  journal={Canadian Journal of Mathematics},
  volume={65},
  number={5},
  pages={961--988},
  year={2013},
  publisher={Cambridge University Press}
}

@article{hahn2020mustafin,
  title={Mustafin models of projective varieties and vector bundles},
  author={Hahn, Marvin Anas},
  journal={arXiv preprint arXiv:2006.08050},
  year={2020}
}

@article{keel2006geometry,
  title={Geometry of Chow quotients of Grassmannians},
  author={Keel, Sean and Tevelev, Jenia and others},
  journal={Duke Mathematical Journal},
  volume={134},
  number={2},
  pages={259--311},
  year={2006},
  publisher={Duke University Press}
}

@article{lieblich2020two,
  title={Two Hilbert schemes in computer vision},
  author={Lieblich, Max and Meter, Lucas Van},
  journal={SIAM Journal on Applied Algebra and Geometry},
  volume={4},
  number={2},
  pages={297--321},
  year={2020},
  publisher={SIAM}
}

@article{he2019linked,
  title={Linked Flag Schemes and Applications},
  author={He, Xiang and Zhang, Naizhen},
  journal={arXiv preprint arXiv:1910.08163},
  year={2019}
}

@article{mustafin1978nonarchimedean,
  title={{Nonarchimedean uniformization}},
  author={Mustafin, GA},
  journal={Sb. Math.},
  volume={34},
  number={2},
  pages={187--214},
  year={1978},
  publisher={Turpion Ltd}
}

@misc{SP,
  author       = {The {Stacks project authors}},
  title        = {The Stacks project},
  howpublished = {\url{https://stacks.math.columbia.edu}},
  year         = {2019},
}

@article{Lirational,
  title={Images of rational maps of projective spaces},
  author={Li, Binglin},
  journal={International Mathematics Research Notices},
  volume={2018},
  number={13},
  pages={4190--4228},
  year={2017},
  publisher={Oxford University Press}
}

@article{haebich,
  title={Mustafin degenerations},
  author={H{\"a}bich, Mathias},
  journal={Beitr{\"a}ge zur Algebra und Geometrie/Contributions to Algebra and Geometry},
  volume={55},
  number={1},
  pages={243--252},
  year={2014},
  publisher={Springer}
}

@article{cartwright2011mustafin,
  title={{Mustafin varieties}},
  author={Cartwright, Dustin and H{\"a}bich, Mathias and Sturmfels, Bernd and Werner, Annette},
  journal={Selecta Math.},
  volume={17},
  number={4},
  pages={757--793},
  year={2011},
  publisher={Springer}
}

@article{hahn2017mustafin,
  title={{Mustafin varieties, moduli spaces and tropical geometry}},
  author={Hahn, Marvin Anas and Li, Binglin},
  journal={arXiv preprint arXiv:1707.01216},
  year={2017}
}

@book{fulton2013intersection,
  title={{Intersection theory}},
  author={Fulton, William},
  volume={2},
  year={2013},
  publisher={Springer Science \& Business Media}
}

@incollection{fa,
   title={Toroidal resolutions for some matrix singularities},
  author={Faltings, Gerd},
  editor={Faber, Carel and van der Geer, Gerard and Oort, Frans},
  booktitle={Moduli of Abelian Varieties},
  pages={157--184},
  year={2001},
  publisher={Springer}
}
\end{document}